\numberwithin{equation}{section}
\newcommand{\td}{\,\mathrm{d}}
\renewcommand\Re{\operatorname{Re}}
\renewcommand\Im{\operatorname{Im}}
\DeclareMathOperator{\tr}{tr}
\DeclareMathOperator{\rk}{rk}
\newcommand{\Sym}{\operatorname{Sym}}
\newcommand{\Herm}{\operatorname{Herm}}
\newcommand{\Str}{\operatorname{Str}}
\newcommand{\Aut}{\operatorname{Aut}}
\newcommand{\GL}{\operatorname{GL}}
\newcommand{\PSL}{\operatorname{PSL}}
\renewcommand{\sl}{\mathfrak{sl}}
\renewcommand{\sp}{\mathfrak{sp}}
\newcommand{\SO}{\operatorname{SO}}
\newcommand{\so}{\mathfrak{so}}
\newcommand{\PSU}{\operatorname{PSU}}
\newcommand{\su}{\mathfrak{su}}
\newcommand{\End}{\operatorname{End}}
\newcommand{\RR}{\mathbb{R}}
\newcommand{\KK}{\mathbb{K}}
\newcommand{\CC}{\mathbb{C}}
\newcommand{\NN}{\mathbb{N}}
\newcommand{\HH}{\mathbb{H}}
\newcommand{\OO}{\mathbb{O}}
\newcommand{\BB}{\mathbb{B}}
\newcommand{\FF}{\mathbb{F}}
\renewcommand{\1}{\mathbf{1}}
\newcommand{\calE}{\mathcal{E}}
\newcommand{\calF}{\mathcal{F}}
\newcommand{\calO}{\mathcal{O}}
\newcommand{\calB}{\mathcal{B}}
\newcommand{\calP}{\mathcal{P}}
\newcommand{\calU}{\mathcal{U}}
\newcommand{\calD}{\mathcal{D}}
\newcommand{\calR}{\mathcal{R}}
\newcommand{\calI}{\mathcal{I}}
\newcommand{\calJ}{\mathcal{J}}
\newcommand{\calK}{\mathcal{K}}
\newcommand{\frakg}{\mathfrak{g}}
\newcommand{\frakk}{\mathfrak{k}}
\newcommand{\frakp}{\mathfrak{p}}
\DeclareMathOperator{\Det}{Det}
\newcommand{\diag}{\operatorname{diag}}
\newcommand{\even}{\textup{even}}
\theoremstyle{plain}
\newtheorem{theorem}{Theorem}[section]
\newtheorem{proposition}[theorem]{Proposition}
\newtheorem{lemma}[theorem]{Lemma}
\newtheorem{thmalph}{Theorem}
\theoremstyle{definition}
\newtheorem{example}[theorem]{Example}
\newtheorem{remark}[theorem]{Remark}
\theoremstyle{remark}
\numberwithin{equation}{section}
\begin{document}

\title{Heat kernel analysis for Bessel operators on symmetric cones}
\author{Jan M\"ollers}
\date{August 31, 2012}
\maketitle
\begin{abstract}

We investigate the heat equation corresponding to the Bessel operators on a symmetric cone $\Omega=G/K$. These operators form a one-parameter family of elliptic self-adjoint second order differential operators and occur in the Lie algebra action of certain unitary highest weight representations. The heat kernel is explicitly given in terms of a multivariable $I$-Bessel function on $\Omega$. Its corresponding heat kernel transform defines a continuous linear operator between $L^p$-spaces. The unitary image of the $L^2$-space under the heat kernel transform is characterized as a weighted Bergmann space on the complexification $G_\CC/K_\CC$ of $\Omega$, the weight being expressed explicitly in terms of a multivariable $K$-Bessel function on $\Omega$. Even in the special case of the symmetric cone $\Omega=\RR_+$ these results seem to be new.\\

\textit{2000 Mathematics Subject Classification:} Primary 58J35; Secondary 22E45, 30H20, 33C70.\\

\textit{Key words and phrases:} Heat kernel transform, Segal--Bargmann transform, symmetric cone, Bergmann space, Bessel operator, Bessel function.

\end{abstract} 




\addcontentsline{toc}{section}{Introduction}
\section*{Introduction}

Heat equations are important parabolic partial differential equations and have been studied in various different settings. Classically one considers the heat equation corresponding to the Laplace--Beltrami operator $\Delta$ on a Riemannian manifold $X$:
\begin{align}
 \Delta_xu(t,x) &= \partial_tu(t,x), & \mbox{on $(0,\infty)\times X$.}\label{eq:ClassicalHeatEquation}
\end{align}
The properties of $\Delta$ crucial for the heat equation are:
\begin{itemize}
\item $\Delta$ is an elliptic second-order differential operator on $X$,
\item It extends to a self-adjoint operator on $L^2(X)$ with spectrum contained in $(-\infty,0]$.
\end{itemize}
The corresponding heat kernel transform $e^{t\Delta}$ is well-defined by the functional calculus and gives a smoothing operator on $L^2(X)$ for $t>0$ constructing solutions to the heat equation \eqref{eq:ClassicalHeatEquation} with given initial values. A classical problem in harmonic analysis is to determine the image of $L^2(X)$ under the heat kernel transform. For $X=\RR^n$ Bargmann and Segal noticed that every function in the image of $e^{t\Delta}$ has an analytic extension to the complexification $X_\CC=\CC^n$. In fact, $e^{t\Delta}$ is a unitary isomorphism from $L^2(\RR^n)$ onto the weighted Bergmann space of holomorphic functions on $\CC^n$ which are square-integrable with respect to the measure $e^{\frac{1}{2t}|\Im z|^2}\td z$.

These observations have a natural generalization to Riemannian symmetric spaces $X$. For the case of compact Riemannian symmetric spaces this was first established by Hall \cite{Hal94}. If $X=U/K$ with $U$ a compact Lie group and $K$ a symmetric subgroup then the heat kernel transform $e^{t\Delta}$ extends to a unitary operator between $L^2(U/K)$ and a weighted Bergmann space on the complexification $X_\CC=U_\CC/K_\CC$. An alternative proof for the unitarity of the transform was given by Hilgert--Zhang \cite{HZ09} using the so-called restriction principle which was introduced by {\'O}lafsson--{\O}rsted \cite{OO96}. For semisimple Riemannian symmetric spaces of non-compact type Kr{\"o}tz--\'Olafsson--Stanton \cite{KOS05} characterized the unitary image of the heat kernel transform. Let $X=G/K$ with $G$ a non-compact semisimple Lie group and $K$ a maximal compact subgroup. The main difference to the compact case is that functions in the image of the heat kernel transform do in general not extend to the whole complexification $X_\CC=G_\CC/K_\CC$, but only to a $G$-invariant subdomain in $X_\CC$, the complex crown (also referred to as the Akhiezer--Gindikin domain). Further, it is shown in \cite{KOS05} that in general the unitary image of $L^2(X)$ under the heat kernel transform cannot be a weighted Bergmann space on the complex crown.\\

Here we restrict our attention to symmetric cones $\Omega=G/K$, a certain class of non-compact Riemannian symmetric spaces with $G$ is a connected reductive Lie group with one-dimensional center and $K\subseteq G$ a maximal compact subgroup. Classical examples are $\Omega=\RR_+$, the Lorentz cone $\Omega=\{x\in\RR^n:x_1>\sqrt{x_2^2+\cdots+x_n^2}>0\}$ or the cone of positive definite Hermitian $n\times n$ matrices over $\RR$, $\CC$ or $\HH$. Explicit expressions for the classical heat kernel in these examples are given in \cite{Din99,Saw06}. In this paper, however, we consider not the Laplacian on $\Omega$, but a certain family of differential operators $B_\lambda$ which arise from unitary representation theory (see Section \ref{sec:BesselOperators} for the precise definition). In fact, for $\lambda>c(\Omega)$ (the constant $c(\Omega)>0$ depending only on the cone $\Omega$, see \eqref{eq:DefcOmega}) the so-called \textit{Bessel operators} $B_\lambda$ occur in the Lie algebra action of certain unitary representations, namely $L^2$-models for the analytic continuation of the holomorphic discrete series corresponding to the automorphism group of the associated tube domain $V+i\Omega$. For this we realize $\Omega$ as an open convex cone in an ambient Euclidean vector space $V$ with inner product $(-|-)$. As in the classical setting the operators $B_\lambda$ satisfy the following properties:
\begin{itemize}
\item Each $B_\lambda$ is an elliptic second-order differential operator on $\Omega$,
\item For $\lambda>c(\Omega)$ the operator $B_\lambda$ extends to a self-adjoint operator on $L^2(\Omega,\td\mu_\lambda)$ with spectrum given by $(-\infty,0]$.
\end{itemize}
Here $\td\mu_\lambda$ is a certain $G$-equivariant measure on $\Omega$ (see Section \ref{sec:EquivariantMeasures}). We remark that in contrast to the $G$-invariant Laplacian on $\Omega$ the operators $B_\lambda$ are merely $K$-invariant. Let $H_\lambda(t):=e^{tB_\lambda}$ be the corresponding heat kernel transform on $L^2(\Omega,\td\mu_\lambda)$. The operators $H_\lambda(t)$ are smoothing, $K$-equivariant and satisfy the semigroup property $H_\lambda(s)\circ H_\lambda(t)=H_\lambda(s+t)$.

To describe the heat kernel we make use of the $I$-Bessel function on the symmetric cone $\Omega$ introduced in \cite{Moe12}. It can be constructed using the canonical structure of a Jordan algebra on $V$ (see Section \ref{sec:BesselFunctions} for details). The $I$-Bessel function of parameter $\lambda>c(\Omega)$ on $\Omega$ is an analytic function $\calI_\lambda(x,y)$ on $\Omega\times\Omega$ solving the differential equation
\begin{align*}
 (B_\lambda)_x\calI_\lambda(x,y) &= \tr(y)\calI_\lambda(x,y), & x,y\in\Omega,
\end{align*}
where $\tr(y)=(y|e)>0$, $y\in\Omega$, denotes the trace form, $e\in\Omega$ being the fixed point of $K$. (In fact $e$ is the unit element of the Jordan algebra $V$ and $\tr(y)$ the Jordan trace.) Put
\begin{align*}
 h_\lambda(t,x,y) &:= (2t)^{-r\lambda}e^{-\frac{1}{t}(\tr(x)+\tr(y))}\calI_\lambda\left(\frac{x}{t},\frac{y}{t}\right), & t>0,x,y\in\Omega,
\end{align*}
where $r=\rk\Omega$ is the rank of the symmetric space $G/K$.

\begin{thmalph}[see Theorems \ref{thm:HeatKernelTransform} \&\ \ref{thm:HeatKernelSemigroup}]
For $\lambda>c(\Omega)$ the heat kernel transform $H_\lambda(t)$ on $L^2(\Omega,\td\mu_\lambda)$ is the integral operator given by
\begin{align*}
 H_\lambda(t)f(x) &= \int_\Omega{h_\lambda(t,x,y)f(y)\td\mu_\lambda(y)}, & x\in\Omega.
\end{align*}
For any $1\leq p\leq\infty$ the integral converges absolutely for $f\in L^p(\Omega,\td\mu_\lambda)$ and defines a continuous linear operator on $L^p(\Omega,\td\mu_\lambda)$.
\end{thmalph}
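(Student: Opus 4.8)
The statement separates into two parts: identifying the integral operator with kernel $h_\lambda$ with the functional-calculus operator $e^{tB_\lambda}$, and proving that this integral operator is bounded on every $L^p(\Omega,\td\mu_\lambda)$. I would settle the first on $L^2$ via the spectral resolution of $B_\lambda$, and the second by a Schur-type estimate resting on explicit integral identities for the $\calI$-Bessel function.

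For the kernel identification, the natural tool is the Hankel-type transform $\calW_\lambda$ attached to $B_\lambda$: the oscillatory $\calJ$-Bessel function $\calJ_\lambda(x,\xi)$ furnishes a complete family of tempered eigenfunctions with $(B_\lambda)_x\calJ_\lambda(x,\xi)=-\tr(\xi)\calJ_\lambda(x,\xi)$, and $\calW_\lambda$ is a unitary involution of $L^2(\Omega,\td\mu_\lambda)$, with symmetric real kernel $\calJ_\lambda$, intertwining $B_\lambda$ with multiplication by $-\tr(\xi)$; this matches the spectrum $(-\infty,0]$. By the functional calculus $e^{tB_\lambda}=\calW_\lambda\circ e^{-t\tr(\xi)}\circ\calW_\lambda$, so its Schwartz kernel is the spectral integral $\int_\Omega\calJ_\lambda(x,\xi)\,e^{-t\tr(\xi)}\,\calJ_\lambda(y,\xi)\,\td\mu_\lambda(\xi)$. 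The crux is to evaluate this in closed form: a multivariable analogue of the Hille--Hardy formula for Laguerre/Bessel functions, proved on the Jordan algebra $V$ through the power functions and Gindikin's gamma function, collapses it to precisely $h_\lambda(t,x,y)$, the $\calI$-Bessel function arising from integrating the product $\calJ_\lambda\cdot\calJ_\lambda$ against the weight $e^{-t\tr(\xi)}$.

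For the $L^p$ bounds I would use that $h_\lambda$ is nonnegative (positivity of $\calI_\lambda$ on $\Omega\times\Omega$) and symmetric in $x,y$ (symmetry of $\calI_\lambda$, reflecting self-adjointness of $B_\lambda$), together with the normalization $\int_\Omega h_\lambda(t,x,y)\,\td\mu_\lambda(y)=1$ for every $x\in\Omega$. The last identity is a Laplace-type integral of the $\calI$-Bessel function that I would derive from the same Jordan-theoretic circle of formulas; it reflects that $B_\lambda$ annihilates constants, so the semigroup is conservative. Granting symmetry and this row-sum identity, Schur's test yields $\|H_\lambda(t)\|_{L^p\to L^p}\le 1$ for all $1\le p\le\infty$ in one stroke (equivalently, contractivity on $L^1$ and $L^\infty$ followed by Riesz--Thorin interpolation). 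Absolute convergence of the defining integral for $f\in L^p$ then follows from Hölder's inequality once $h_\lambda(t,x,\cdot)\in L^{p'}(\Omega,\td\mu_\lambda)$, which is immediate from the growth bound $\calI_\lambda(x/t,y/t)\lesssim e^{\frac{2}{t}\sqrt{\tr(x)\tr(y)}}$: the exponential factor $e^{-\frac1t(\tr(x)+\tr(y))}$ dominates the growth of $\calI_\lambda$ and forces Gaussian-type decay of $h_\lambda(t,x,\cdot)$ in $y$.

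The main obstacle is the pair of explicit special-function identities on the cone — the multivariable Hille--Hardy evaluation of the spectral integral and the companion normalization identity — since these require the fine structure of the Bessel and power functions on $V$ rather than any soft functional-analytic input. (As a cross-check on the first, one can instead verify directly that $h_\lambda$ solves $\partial_t u=(B_\lambda)_x u$ with $h_\lambda(t,x,\cdot)\,\td\mu_\lambda\to\delta_x$ as $t\to0^+$ and invoke uniqueness for the Cauchy problem; the Chapman--Kolmogorov/semigroup identity $P_s\circ P_t=P_{s+t}$ then follows and is equivalent to the same Bessel convolution formula.) Once these closed forms are in hand, the identification of $H_\lambda(t)$ with the integral operator on $L^2$ and the uniform $L^p$ continuity follow by the routine arguments above, and agreement on all of $L^p$ comes from density of $L^2\cap L^p$ together with the continuity just established.
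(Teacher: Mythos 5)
Your skeleton coincides with the paper's own proof almost line by line: your Hankel-type transform $\calW_\lambda$ is the paper's unitary inversion operator $\calU_\lambda$ of \eqref{eq:DefUnitaryInversion} (Proposition \ref{prop:SpectralDecompBessel}); your closed-form evaluation of the spectral integral $\int_\Omega{e^{-t\tr(\xi)}\calJ_\lambda(x,\xi)\calJ_\lambda(y,\xi)\td\mu_\lambda(\xi)}$ as $h_\lambda(t,x,y)$ is exactly \eqref{eq:HeatKernelRepFormula} (Proposition \ref{prop:HeatKernelProperties1}~(4)), used in precisely this way in the proof of Theorem \ref{thm:HeatKernelSemigroup}; and your nonnegativity--symmetry--normalization Schur argument is the proof of Theorem \ref{thm:HeatKernelTransform} (the paper runs H\"older directly with dual exponents rather than $L^1$--$L^\infty$ contractivity plus Riesz--Thorin, which is equivalent). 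The one genuine divergence is where you locate the difficulty and how you propose to discharge it: you defer the two key identities (the ``multivariable Hille--Hardy'' evaluation and the row-sum identity $\int_\Omega{h_\lambda(t,x,y)\td\mu_\lambda(y)}=1$) to hard special-function computations with power functions and the Gindikin gamma function. The paper instead obtains both from a single soft representation-theoretic input: the Segal--Bargmann transform $\BB_\lambda$ intertwines $\calU_\lambda$ with the parity operator $(-1)^*$ (relation \eqref{eq:IntRelSBTrafoUnitInv}); comparing the two resulting integral kernels gives Lemma \ref{lem:RepPropIBessel}, from which the reproducing identity and the normalization follow in a few lines, with no fine Bessel or Pochhammer manipulations at all. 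So what you flag as the main obstacle is, in the paper, the cheapest step --- the price having been paid elsewhere, in the prior construction of the two unitary models and their intertwiner.

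Two cautions on your fallback arguments. First, your cross-check via ``uniqueness for the Cauchy problem'' is not routine here: $\Omega$ is non-compact, $B_\lambda$ degenerates at $\partial\Omega$, and $h_\lambda(t,x,\cdot)\td\mu_\lambda\to\delta_x$ would itself need proof, plus a growth class in which uniqueness holds; the paper deliberately avoids this route. Second, your growth bound $\calI_\lambda(x/t,y/t)\lesssim e^{\frac{2}{t}\sqrt{\tr(x)\tr(y)}}$ drops the polynomial factor and the rank constant from Proposition \ref{prop:BesselDiffEq}~(2); the correct exponent is $2r\sqrt{|x|\cdot|y|}/t$, and with it the decay of $h_\lambda(t,x,\cdot)$ for fixed $x$ is exponential (of order $e^{-\tr(y)/t}$ up to a $\sqrt{|y|}$ correction), not Gaussian --- still amply sufficient for $h_\lambda(t,x,\cdot)\in L^{p'}(\Omega,\td\mu_\lambda)$, so your absolute-convergence claim survives once the constants are repaired.
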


This explicit description of the heat kernel allows to describe the image of $L^2(\Omega,\td\mu_\lambda)$ under the heat kernel transform as a weighted Bergmann space on the natural complexification $\Xi:=G_\CC/K_\CC$ of $\Omega$. The symmetric space $\Xi$ can be realized as an open dense cone in the complexification $V_\CC$ of $V$ which contains $\Omega\subseteq V$ as a totally real submanifold. Since $\Xi$ is dense in $V_\CC$ every entire function on $V_\CC$ is uniquely determined by its values on $\Xi$ and we let $\calO(\overline{\Xi})$ be the space of holomorphic functions on $\Xi$ which extend to $V_\CC$. From the measures $\td\mu_\lambda$ on $\Omega$ one constructs certain $G_\CC$-equivariant measures $\td\nu_\lambda$ on $\Xi$ (see \eqref{eq:EquivMeasuresOnXi}). To describe the weight for the Bergmann space we make use of the $K$-Bessel function $\calK_\lambda(x)$ of parameter $\lambda$ on the symmetric cone $\Omega$. This function was first introduced by Clerc \cite{Cle88} and studied further in \cite{Moe12}. We construct a density $\omega_\lambda(z)$ on $\Xi$ in terms of the $K$-Bessel function which solves the differential equation
\begin{align*}
 B_\lambda\omega_\lambda(z) &= \frac{1}{4}\overline{\tr(z)}\omega_\lambda(z), & z\in\Xi,
\end{align*}
where $\tr(z)$ denotes the $\CC$-linear extension of the trace to $V_\CC$. We then define a weighted Bergmann space $\calF_{\lambda,t}(\Xi)$ on $\Xi$ by
\begin{align*}
 \calF_{\lambda,t}(\Xi) &:= \left\{F\in\calO(\overline{\Xi}):\int_\Xi{|F(z)|^2e^{\frac{1}{t}\Re\tr(z)}\omega_\lambda\left(\frac{z}{t}\right)\td\nu_\lambda(z)}<\infty\right\}
\end{align*}
and endow it with the obvious $L^2$-inner product (suitably normalized, see \eqref{eq:BergmannSpaceInnerProduct} for details). Then for $\lambda>c(\Omega)$ the space $\calF_{\lambda,t}(\Xi)$ turns into a Hilbert space of holomorphic functions with continuous point evaluations. Its reproducing kernel is the analytic continuation of the heat kernel $h_\lambda(2t,z,w)$ of parameter $2t$ with $z,w\in\Xi$, holomorphic in $z$ and antiholomorphic in $w$ (see Theorem \ref{thm:HeatKernelTrafoImage}).

\begin{thmalph}[see Proposition \ref{prop:HeatKernelTrafoAnalyticExtension} and Theorem \ref{thm:HeatKernelTrafoImage}]
Let $\lambda>c(\Omega)$, then for every $f\in L^2(\Omega,\td\mu_\lambda)$ the function $H_\lambda(t)f$ on $\Omega$ extends uniquely to a holomorphic function $\widetilde{H}_\lambda(t)f\in\calF_{\lambda,t}(\Xi)$. The operator $\widetilde{H}_\lambda(t):L^2(\Omega,\td\mu_\lambda)\to\calF_{\lambda,t}(\Xi)$ is a unitary isomorphism.
\end{thmalph}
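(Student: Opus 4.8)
The plan is to realize $\widetilde{H}_\lambda(t)$ as an integral operator into the reproducing-kernel Hilbert space $\calF_{\lambda,t}(\Xi)$ and to prove it is unitary by exhibiting it as a coisometry with trivial kernel. By the analytic-extension Proposition, $\widetilde{H}_\lambda(t)f$ is the holomorphic continuation in the first variable of $H_\lambda(t)f$, so that
\[
 \widetilde{H}_\lambda(t)f(z) = \int_\Omega h_\lambda(t,z,y)f(y)\,\td\mu_\lambda(y), \qquad z\in\Xi,
\]
and this defines a bounded operator $T:=\widetilde{H}_\lambda(t)\colon L^2(\Omega,\td\mu_\lambda)\to\calF_{\lambda,t}(\Xi)$ whose integral kernel is the continued heat kernel $h_\lambda(t,z,y)$, holomorphic in $z$. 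First I would verify that for each fixed $y\in\Omega$ the coherent state $h_\lambda(t,\cdot,y)$ lies in $\calF_{\lambda,t}(\Xi)$, which identifies the adjoint as $T^*F(y)=\langle F,h_\lambda(t,\cdot,y)\rangle_{\calF_{\lambda,t}(\Xi)}$.

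The crux is the continued Chapman--Kolmogorov identity
\[
 \int_\Omega h_\lambda(t,z,y)\,\overline{h_\lambda(t,w,y)}\,\td\mu_\lambda(y) = h_\lambda(2t,z,w), \qquad z,w\in\Xi.
\]
For real arguments $z=x,\,w=x'\in\Omega$ this is the ordinary semigroup relation: it follows from $H_\lambda(t)\circ H_\lambda(t)=H_\lambda(2t)$ together with the symmetry $h_\lambda(t,y,x)=h_\lambda(t,x,y)$ coming from self-adjointness of $H_\lambda(t)$ on $L^2(\Omega,\td\mu_\lambda)$. Both sides continue holomorphically in $z$ and antiholomorphically in $w$, and since $\Omega$ is totally real in $V_\CC$ and $\Xi$ is dense, the continuations are unique and the identity propagates to all $z,w\in\Xi$. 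Because $h_\lambda(2t,\cdot,w)$ is the reproducing kernel of $\calF_{\lambda,t}(\Xi)$---a point one checks from the explicit $K$-Bessel weight $\omega_\lambda$---the displayed identity is precisely the statement $TT^*=\id_{\calF_{\lambda,t}(\Xi)}$, so $T$ is a coisometry and in particular surjective.

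It remains to see $T$ is injective, whence a coisometry with trivial kernel is unitary. If $Tf=0$ then $H_\lambda(t)f$ vanishes on $\Omega$ by restriction; since $H_\lambda(t)=e^{tB_\lambda}$ with $B_\lambda$ self-adjoint of spectrum contained in $(-\infty,0]$, the factor $e^{t\sigma}$ is strictly positive on the spectrum, so $H_\lambda(t)$ has trivial kernel and $f=0$. This completes the argument. The main obstacle is the analytic justification underlying the second paragraph: proving $h_\lambda(t,\cdot,y)\in\calF_{\lambda,t}(\Xi)$ and that the overlap integral converges absolutely and continues (anti)holomorphically, so that Fubini and the uniqueness of analytic continuation may be applied. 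This rests on sharp growth and decay estimates for the holomorphically continued multivariable $I$-Bessel function $\calI_\lambda$ played off against the Gaussian factors $e^{-\frac1t(\tr z+\tr y)}$ and the $K$-Bessel weight $\omega_\lambda$ from \cite{Moe12}; controlling these Bessel asymptotics on $\Xi$ is the technical heart. As an alternative one could run the restriction principle of \cite{OO96,HZ09}, taking the polar decomposition of the densely defined restriction map $\calF_{\lambda,t}(\Xi)\to L^2(\Omega,\td\mu_\lambda)$ whose unitary part is $\widetilde{H}_\lambda(t)$, but verifying closedness and evaluating $R^*R$ leads to the same Bessel integrals and hence the identical analytic difficulty.
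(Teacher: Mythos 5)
Your strategy is correct in outline and genuinely different from the paper's. The paper obtains unitarity by factoring through the Segal--Bargmann transform: in the proof of Theorem \ref{thm:HeatKernelTrafoImage} it introduces the unitary rescaling $\Phi_tF(z)=(4t)^{-\frac{1}{2}r\lambda}e^{-\frac{1}{2t}\tr(z)}F(\tfrac{z}{t})$ and the dilation $\tau_tf(x)=t^{\frac{r\lambda}{2}}f(tx)$, verifies $\widetilde{H}_\lambda(t)=\Phi_t\circ\BB_\lambda\circ\tau_t$, and thus inherits unitarity from Theorem \ref{thm:FockSpace}~(c), i.e.\ from the unitarity of $\BB_\lambda$ established in \cite{Moe12}. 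You instead run a reproducing-kernel argument: the analytically continued semigroup identity together with the identification $\KK_{\lambda,t}(z,w)=h_\lambda(2t,z,w)$ gives $TT^*=\id$, and injectivity of $e^{tB_\lambda}$ (strict positivity of $e^{t\sigma}$ on the spectrum) upgrades the coisometry to a unitary. The continuation step is sound: both sides of the Chapman--Kolmogorov identity are holomorphic in $z$ and antiholomorphic in $w$ by the estimate in Proposition \ref{prop:HeatKernelAnalyticExtension}~(1), and they agree on the totally real $\Omega\times\Omega$; your injectivity step is also correct, and your alternative via the restriction principle is exactly the paper's Section \ref{sec:RestrictionPrinciple}.

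Two caveats, one of which is a genuine gap. First, the claim that $h_\lambda(2t,\cdot,w)$ is the reproducing kernel of $\calF_{\lambda,t}(\Xi)$ is not ``a point one checks from the explicit $K$-Bessel weight'': transported by $\Phi_t$ it is precisely Theorem \ref{thm:FockSpace}~(a), the main analytic input imported from \cite{Moe12}, and it is of the same depth as the unitarity of $\BB_\lambda$ itself (this is how the paper proves Theorem \ref{thm:HeatKernelTrafoImage}~(1)). You should cite it rather than check it; your proof then consumes input (a) where the paper consumes input (c), which is a legitimate trade. Second, the membership $h_\lambda(t,\cdot,y)\in\calF_{\lambda,t}(\Xi)$ of the time-$t$ coherent states, which you need to identify $T^*$, cannot be established from the estimates available in the paper: the bound of Proposition \ref{prop:BesselDiffEq}~(2), played against the merely exponential (not Gaussian) decay of $\omega_\lambda$, fails to give convergence in the directions where $\Re\tr(z)\approx-|z|$, where $\calI_\lambda$ in fact behaves like the oscillatory $\calJ_\lambda$ rather than like its crude upper bound. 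You correctly flag this as the technical heart, but it can be avoided entirely: define the candidate inverse only on the dense span of kernel functions by $S\,\KK_{\lambda,t}(\cdot,w):=\overline{h_\lambda(t,w,\cdot)}$, which lies in $L^2(\Omega,\td\mu_\lambda)$ by Proposition \ref{prop:HeatKernelAnalyticExtension}~(1); the continued semigroup identity shows $S$ is isometric on that span, its range is dense because $f\perp h_\lambda(t,x,\cdot)$ for all $x\in\Omega$ forces $H_\lambda(t)f=0$ and hence $f=0$, so $S$ extends to a unitary, and the reproducing property gives $(S^*f)(w)=\langle f,S\KK_{\lambda,t}(\cdot,w)\rangle=\widetilde{H}_\lambda(t)f(w)$, i.e.\ $\widetilde{H}_\lambda(t)=S^*$ is unitary. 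With these two repairs your argument closes.
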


Our proof uses the intertwining operator between two different realizations of certain unitary highest weight representations (see \cite{HKMO12,Moe12}). However, there is an alternative proof for the unitarity of the heat kernel transform $\widetilde{H}_\lambda(t)$ using the restriction principle. For this one considers the unbounded operator $\calR_{\lambda,t}:\calD(\calF_{\lambda,t}(\Xi))\to L^2(\Omega,\td\mu_\lambda)$ given by restriction from $\Xi$ to $\Omega$. The heat kernel transform $\widetilde{H}_{\lambda,t}$ can then be identified with the unitary part $\calU_{\lambda,t}$ in the polar decomposition $\calR_{\lambda,t}^*=\calU_{\lambda,t}\circ|\calR_{\lambda,t}|$ of the adjoint operator (see Section \ref{sec:RestrictionPrinciple} for details).\\

We briefly illustrate our results for the cone $\Omega=\RR_+$ with equivariant measures $\td\mu_\lambda(x)=x^{\lambda-1}\td x$, $\lambda>c(\Omega)=0$ (see Section \ref{sec:Example} for details). In this case the Bessel operator $B_\lambda$ is given by
\begin{align*}
 B_\lambda &= x\frac{\td^2}{\td x^2}+\lambda\frac{\td}{\td x},
\end{align*}
and the heat kernel $h_\lambda(t,x,y)$ and the density $\omega_\lambda(z)$ take the form
\begin{align*}
 h_\lambda(t,x,y) &= (2t)^{-\lambda}e^{-\frac{x+y}{t}}\widetilde{I}_{\lambda-1}\left(2\frac{\sqrt{xy}}{t}\right), && t>0,x,y\in\RR_+,\\
 \omega_\lambda(z) &= 2\widetilde{K}_{\lambda-1}(|z|), && z\in\CC^\times,
\end{align*}
where $\widetilde{I}_\alpha(z)=\left(\frac{z}{2}\right)^{-\alpha}I_\alpha(z)$ and $\widetilde{K}_\alpha(z)=\left(\frac{z}{2}\right)^{-\alpha}K_\alpha(z)$ denote the renormalized classical $I$- and $K$-Bessel functions. Moreover, the corresponding measures on the complexification $\Xi=\CC^\times$ are given by $\td\nu_\lambda(z)=2|z|^{2\lambda-1}\td z$. Specializing the parameter to $\lambda=\frac{1}{2}$ and using the squaring map $\pi:\RR\setminus\{0\}\to\Omega,\,y\mapsto y^2$ we recover the classical heat equation on $\RR$ and our results translate to the corresponding results in the classical case for even functions on $\RR$ resp. $\CC$. For other parameters $\lambda\neq\frac{1}{2}$, however, our results seem to be new.\\

Finally we remark that all our results carry over to the boundary orbits of $\Omega$. For each such $G$-orbit $\calO\subseteq\partial\Omega$ there is a unique value $0\leq\lambda\leq c(\Omega)$ such that $B_\lambda$ restricts to a differential operator on $\calO$ which is self-adjoint on $L^2(\calO,\td\mu)$ for the unique (up to scalar multiples) $G$-equivariant measure $\td\mu$ on $\calO$. For this particular $\lambda$ analogous results as described above hold, involving the Bessel functions $\calI_\lambda(x,y)$ and $\calK_\lambda(x)$ on $\calO$ of parameter $\lambda$ and the $G_\CC$-orbit $G_\CC\cdot\calO\subseteq\partial\Xi$ as complexification of $\calO$ (see \cite{Moe12}).\\

\textbf{Acknowledgements.} We thank B. {\O}rsted and G. {\'O}lafsson for helpful discussions on the topic of this paper and their encouragement to publish these results.\\

\textbf{Notation.} $\NN=\{1,2,3,\ldots\}$, $\NN_0=\NN\cup\{0\}$, $\RR_+=\{x\in\RR:x>0\}$.
\section{Symmetric cones}

In this first section we collect all necessary information on symmetric cones and their natural complexifications. We further define Bessel operators and Bessel functions on symmetric cones and their complexifications. In preparation for the proofs of our main results we introduce the Segal--Bargmann transform and the unitary inversion operator which are motivated by representation theory. For details we refer the reader to \cite{FK94} and \cite{Moe12}.

\subsection{Symmetric cones and Jordan algebras}

Let $V$ be a Euclidean vectorspace of dimension $n$ with inner product $(-|-)$. An open cone $\Omega\subseteq V$ is called \textit{symmetric} if the following two conditions hold:
\begin{enumerate}
\item $\Omega$ is \textit{self-dual}, i.e. $\Omega=\Omega^*:=\{x\in V:(x|y)>0\,\forall\,y\in\overline{\Omega}\setminus\{0\}\}$,
\item $\Omega$ is \textit{homogeneous}, i.e. the automorphism group $G(\Omega):=\{g\in\GL(V):g\Omega=\Omega\}$ of $\Omega$ acts transitively on $\Omega$.
\end{enumerate}

Let $\Omega\subseteq V$ be a symmetric cone and let $G:=G(\Omega)_0$ be the identity component of its automorphism group. Then $G$ already acts transitively on $\Omega$. Put $K:=G\cap O(V)$. Then there exists a point $e\in\Omega$ such that $K=G_e=\{g\in G:ge=e\}$ and hence $\Omega\cong G/K$ is a Riemannian symmetric space. Note that $G/K$ is not semisimple since $G$ contains $\RR_+$ acting by dilations, but $\Omega\cong\RR_+\times X$, where $X$ is a semisimple Riemannian symmetric space. We decompose the Lie algebra $\frakg$ of $G$ into
\begin{align*}
 \frakg &= \frakk \oplus \frakp,
\intertext{where}
 \frakk &= \{X\in\frakg:X+X^*=0\}, & \frakp &= \{X\in\frakg:X=X^*\},
\end{align*}
$X^*$ denoting the adjoint of $X\in\End(V)$ with respect to the inner product $(-|-)$ on $V$. The mapping
\begin{align*}
 \frakp\to V,\,X\mapsto Xe
\end{align*}
is a bijection and we denote by $L:V\to\frakp$ its inverse which is symmetric with respect to the inner product $(-|-)$. If we endow $V$ with the product
\begin{align*}
 V\times V\to V,\,(x,y)\mapsto x\cdot y:=L(x)y,
\end{align*}
then $V$ becomes a \textit{Euclidean Jordan algebra} with unit element $e$. The symmetric cone $\Omega$ is the interior of the cone $\{x^2:x\in V\}$ of squares in $V$ and every $x\in\Omega$ is the square of a unique element $x^{\frac{1}{2}}\in\Omega$. The group $G$ is the identity component of the \textit{structure group} $\Str(V)$ of $V$ which is defined by
\begin{align*}
 \Str(V) &= \{g\in\GL(V):P(gx)=gP(x)g^*\,\forall\,x\in V\},
\end{align*}
where
\begin{align*}
 P(x) &:= 2L(x)^2-L(x^2), & x\in V,
\end{align*}
denotes the \textit{quadratic representation} of $V$. Further, the compact group $K$ is the identity component of the \textit{automorphism group} $\Aut(V)$ of $V$ given by
\begin{align*}
 \Aut(V) &= \{g\in\GL(V):g(x\cdot y)=gx\cdot gy\,\forall\,x,y\in V\}.
\end{align*}

We normalize the inner product $(-|-)$ on $V$ such that $(e|e)$ is equal to the rank $r=\rk(G/K)$ of the symmetric space $G/K$. The linear form $\tr(x):=(x|e)$ on $V$ is called the \textit{Jordan trace} of $V$. It is invariant under $K$ and strictly positive on $\Omega$. On the cone $\Omega$ it is equivalent to the norm function $|x|:=\sqrt{(x|x)}$:
\begin{align*}
 |x| \leq \tr(x) \leq \sqrt{r}|x|, && x\in\Omega.
\end{align*}
We further introduce the \textit{Jordan determinant} $\Delta(x)$ which is the unique homogeneous polynomial on $V$ of degree $r$ with real coefficients such that
\begin{align*}
 \Delta(x)^{\frac{2n}{r}} &= \Det(P(x)), & x\in V.
\end{align*}
Then the character $\chi(g):=\Det(g)^{\frac{r}{n}}\in\RR_+$, $g\in G$, satisfies
\begin{align*}
 \Delta(gx) &= \chi(g)\Delta(x), & g\in G,x\in V.
\end{align*}
For $\lambda\in\RR$ let
\begin{align*}
 \chi_\lambda(g) &:= \chi(g)^\lambda, & g\in G.
\end{align*}
Then every positive character of $G$ is equal to one of the $\chi_\lambda$, $\lambda\in\RR$.

\begin{example}
\begin{enumerate}
\item For $k\in\NN$ and $\FF\in\{\RR,\CC,\HH\}$ the space $V=\Herm(k,\FF)$ of Hermitian $k\times k$ matrices with entries in $\FF$ forms a Jordan algebra with multiplication
\begin{align*}
 x\cdot y &:= \frac{1}{2}(xy+yx), & x,y\in V,
\end{align*}
and unit element $e=\1$ the identity matrix. We have $G=\RR_+\PSL(k,\FF)$ and $K=\PSU(k,\FF)$ acting by
\begin{align*}
 g\cdot x &= gxg^*, & g\in G.
\end{align*}
The corresponding symmetric cone $\Omega$ is the set of all positive definite matrices in $V$. The same construction works for $\FF=\OO$ with $k\leq3$. In all cases the Jordan trace and the Jordan determinant equal the usual trace and determinant of matrices.
\item For $k\in\NN_0$ let $V=\RR^{1,k}=\RR\times\RR^k$ denote the Jordan algebra with multiplication
\begin{multline*}
 (x_1,x')\cdot(y_1,y') := (x_1y_1+\langle x',y'\rangle,x_1y'+y_1x'),\\
 (x_1,x'),(y_1,y')\in\RR\times\RR^k,
\end{multline*}
and unit element $e=(1,0,\ldots,0)$, where $\langle-,-\rangle$ is the usual inner product on $\RR^k$. The corresponding groups are given by $G=\RR_+\SO_0(1,k)$ and $K=\diag(1,\SO(k))\cong\SO(k)$. The associated symmetric cone is the Lorentz cone
\begin{align*}
 \Omega &= \{x\in\RR^{1,k}:x_1>\sqrt{x_2^2+\cdots+x_{k+1}^2}>0\}.
\end{align*}
Jordan trace and Jordan determinant are given by
\begin{align*}
 \tr(x) &= 2x_1, & \Delta(x) &= x_1^2-(x_2^2+\cdots+x_{k+1}^2).
\end{align*}
\end{enumerate}
\end{example}

These examples actually cover all simple Euclidean Jordan algebras or equivalently all irreducible symmetric cones. A list together with the corresponding groups and structure constants is given in Table \ref{tb:Groups}.

\begin{table}[H]
\begin{center}
\begin{tabular}{|c|c|c|c|c|c|}
  \cline{1-6}
  $V$ & $\frakg$ & $\frakk$ & $n$ & $r$ & $d$\\
  \hline\hline
  $\RR$ & $\RR$ & $0$ & $1$ & $1$ & $0$\\
  $\Sym(k,\RR)$ ($k\geq2$) & $\sl(k,\RR)\oplus\RR$ & $\so(k)$ & $\frac{1}{2}k(k+1)$ & $k$ & $1$\\
  $\Herm(k,\CC)$ ($k\geq2$) & $\sl(k,\CC)\oplus\RR$ & $\su(k)$ & $k^2$ & $k$ & $2$\\
  $\Herm(k,\HH)$ ($k\geq2$) & $\su^*(2k)\oplus\RR$ & $\sp(k)$ & $k(2k-1)$ & $k$ & $4$\\
  $\RR^{1,k}$ ($k\geq2$) & $\so(1,k)\oplus\RR$ & $\so(k)$ & $k+1$ & $2$ & $k-1$\\
  $\Herm(3,\OO)$ & $\mathfrak{e}_{6(-26)}\oplus\RR$ & $\mathfrak{f}_4$ & $27$ & $3$ & $8$\\
  \hline
\end{tabular}
\caption{Simple Euclidean Jordan algebras, corresponding Lie algebras and structure constants\label{tb:Groups}}
\end{center}
\end{table}

\subsection{Equivariant measures}\label{sec:EquivariantMeasures}

Let $V$ be a simple real Jordan algebra $V$ with corresponding symmetric cone $\Omega\subseteq V$. For every $\lambda\in\RR$ there is a unique (up to scalar multiples) $\chi_\lambda$-equivariant measure $\td\mu_\lambda$ on $\Omega$. Let $d\in\NN_0$ be determined by the identity
\begin{align*}
 \frac{n}{r} &= 1+(r-1)\frac{d}{2}.
\end{align*}
Then the measure $\td\mu_\lambda$ is locally finite on $\overline{\Omega}\subseteq V$ if and only if $\lambda>c(\Omega)$, where
\begin{align}
 c(\Omega) &:= \frac{n}{r}-1 = (r-1)\frac{d}{2}.\label{eq:DefcOmega}
\end{align}
We will assume this in the following. For $\lambda>c(\Omega)$ we normalize $\td\mu_\lambda$ by
\begin{align*}
 \int_\Omega{f(x)\td\mu_\lambda(x)} &= \frac{2^{r\lambda}}{\Gamma_\Omega(\lambda)}\int_\Omega{f(x)\Delta(x)^{\lambda-\frac{n}{r}}\td x}, & f\in C_c^\infty(\Omega),
\end{align*}
where
\begin{align*}
 \Gamma_\Omega(\lambda) &= (2\pi)^{\frac{n-r}{2}}\prod_{j=1}^r{\Gamma\left(\lambda-(j-1)\frac{d}{2}\right)}
\end{align*}
denotes the \textit{Gamma function} of the symmetric cone $\Omega$. This normalization is chosen such that the function $\psi_0(x)=e^{-\tr(x)}$ has norm $1$ in $L^2(\Omega,\td\mu_\lambda)$.

Denote by $\calP(\Omega)$ the space of restrictions of polynomials on $V$ to $\Omega$. Then for every $\lambda>c(\Omega)$ the space $\calP(\Omega)e^{-\tr(x)}$ is dense in $L^2(\Omega,\td\mu_\lambda)$ (see \cite[Lemma XIII.3.3]{FK94}).

\subsection{Complexifications}

The complexification $V_\CC$ of $V$ is a complex Jordan algebra. We extend the Jordan trace $\tr(-)$ to a $\CC$-linear functional on $V_\CC$. We further extend the trace form $(-|-)$ to a $\CC$-bilinear form on $V_\CC$.

The identity component of the structure group $\Str(V_\CC)$ of $V_\CC$ is in a natural way a complexification of the group $G$ and will be denoted by $G_\CC$. The orbit
\begin{align*}
 \Xi &:= G_\CC\cdot e\subseteq V_\CC
\end{align*}
is an open dense subset of $V_\CC$ and can be identified with the symmetric space $G_\CC/K_\CC$, where $K_\CC$ denotes the analytic subgroup of $G_\CC$ with Lie algebra $\frakk_\CC$. We have $\Omega\subseteq\Xi$ as a totally real submanifold. We also denote by $U$ the analytic subgroup of $G_\CC$ with Lie algebra $\frakk+i\frakp$. Then $U\subseteq G_\CC$ is the maximal compact subgroup of $G_\CC$ corresponding to the Cartan involution $g\mapsto g^{-*}=(g^*)^{-1}$, where $g^*=\overline{g}^\#$ and $g^\#$ denotes the transpose with respect to the $\CC$-bilinear trace form $(-|-)$.

Every element $z\in\Xi$ can be written as $z=ux$ with $u\in U$ and $x\in\Omega$. The formula
\begin{align}
 \int_\Xi{f(z)\td\nu_\lambda(z)} &:= \int_\Omega{\int_U{f(ux^{\frac{1}{2}})\td u}\td\mu_\lambda(x)}\label{eq:EquivMeasuresOnXi}
\end{align}
defines a one-parameter family of equivariant measures $\td\nu_\lambda$ on $\Xi$, $\lambda>c(\Omega)$. This constructs all locally finite $G_\CC$-equivariant measures on $\Xi$.

\subsection{Bessel operators on Jordan algebras}\label{sec:BesselOperators}

Let $\frac{\partial}{\partial x}$ denote the gradient with respect to the inner product $(-|-)$ of $V$. If $(e_\alpha)_\alpha$ is an orthonormal basis of $V$ and $x\in V$ is written as $x=\sum_\alpha{x_\alpha e_\alpha}$ then
\begin{align*}
 \frac{\partial f}{\partial x} &= \sum_\alpha{\frac{\partial f}{\partial x_\alpha}e_\alpha}.
\end{align*}
For $\lambda\in\CC$ we define a vector-valued second order differential operator $\calB_\lambda$ on $V$ by
\begin{align*}
 \calB_\lambda &:= P\left(\frac{\partial}{\partial x}\right)x+\lambda\frac{\partial}{\partial x}.
\end{align*}
$\calB_\lambda$ is called the \textit{Bessel operator} of parameter $\lambda$. In coordinates it be written as
\begin{align*}
 \calB_\lambda f = \sum_{\alpha,\beta}{\frac{\partial^2f}{\partial x_\alpha\partial x_\beta}P(e_\alpha,e_\beta)x}+\lambda\sum_\alpha{\frac{\partial f}{\partial x_\alpha}e_\alpha},
\end{align*}
where we used the polarized quadratic representation
\begin{align*}
 P(x,y) &= \frac{1}{2}(P(x+y)-P(x)-P(y)), & x,y\in V.
\end{align*}
Of particular importance in this article is the component of $\calB_\lambda$ of the unit element $e$:
\begin{align*}
 B_\lambda &:= (e|\calB_\lambda).
\end{align*}

Writing $z\in V_\CC$ as $z=\sum_\alpha{z_\alpha e_\alpha}$ with $z_\alpha=x_\alpha+iy_\alpha$ we extend $\calB_\lambda$ to a vector-valued holomorphic differential operator on $V_\CC$ by
\begin{align*}
 \calB_\lambda f = \sum_{\alpha,\beta}{\frac{\partial^2f}{\partial z_\alpha\partial z_\beta}P(e_\alpha,e_\beta)z}+\lambda\sum_\alpha{\frac{\partial f}{\partial z_\alpha}e_\alpha},
\end{align*}
where we use the \textit{Wirtinger derivatives}
\begin{align*}
 \frac{\partial}{\partial z_\alpha} &= \frac{1}{2}\left(\frac{\partial}{\partial x_\alpha}-i\frac{\partial}{\partial y_\alpha}\right).
\end{align*}
This also extends the scalar-valued holomorphic differential operator $B_\lambda=(e|\calB_\lambda)$.

Denote by $\ell$ the left-regular representation of $G$ or $G_\CC$ on functions on $\Omega$ or $\Xi$, i.e.
\begin{align}
 \ell(g)f(x) &= f(g^{-1}x), & g\in G\mbox{ or }G_\CC,x\in\Omega\mbox{ or }\Xi.\label{eq:DefLeftRegularRep}
\end{align}
Then the Bessel operator $\calB_\lambda$ satisfies the following equivariance property:
\begin{align}
 \ell(g)\calB_\lambda\ell(g^{-1}) &= g^\#\calB_\lambda, & g\in G\mbox{ or }G_\CC.\label{eq:BesselEquivariance}
\end{align}

\subsection{Bessel functions on Jordan algebras}\label{sec:BesselFunctions}

We briefly recall the construction of Bessel functions on Jordan algebras. For details we refer the reader to \cite[Section 3]{Moe12}.

\subsubsection*{Spherical polynomials}

Fix a Jordan frame $c_1,\ldots,c_r$ in $V$. Denote by $V_k$ the eigenspace of $L(c_1+\cdots+c_k)$ to the eigenvalue $1$. Then $V_k$ is a Euclidean Jordan subalgebra of $V$ and the orthogonal projection onto $V_k$ is given by $P(c_1+\cdots+c_k)$. Denote by $\Delta_{V_k}$ the Jordan determinant of $V_k$ and define the \textit{principal minors}
\begin{align*}
 \Delta_k(x) &:= \Delta_{V_k}(P(c_1+\cdots+c_k)x), & x\in V.
\end{align*}
For ${\bf m}\in\NN_0^r$ we say ${\bf m}\geq0$ if $m_1\geq\ldots\geq m_r\geq0$ and in this case we define the \textit{generalized power function} $\Delta_{\bf m}$ on $V$ by
\begin{align*}
 \Delta_{\bf m}(x) &:= \Delta_1(x)^{m_1-m_2}\cdots\Delta_{r-1}(x)^{m_{r-1}-m_r}\Delta_r(x)^{m_r}, & x\in V.
\end{align*}
Then $\Delta_{\bf m}$ is a polynomial on $V$ of order $|{\bf m}|=m_1+\cdots+m_r$. Let $d_{\bf m}$ denote the dimension of the vectorspace spanned by the polynomials $x\mapsto\Delta_{\bf m}(gx)$, $g\in G$.

The correponding \textit{spherical polynomials} are defined by
\begin{align*}
 \Phi_{\bf m}(x) &:= \int_K{\Delta_{\bf m}(kx)\td k}, & x\in V.
\end{align*}
By \cite[Corollary XI.3.4]{FK94} there exists a unique polynomial $\Phi_{\bf m}(z,w)$ on $V_\CC\times V_\CC$, holomorphic in $z$ and antiholomorphic in $w$, such that
\begin{align*}
 \Phi_{\bf m}(gz,w) &= \Phi_{\bf m}(z,g^*w), & z,w\in V_\CC,g\in G_\CC,\\
 \Phi_{\bf m}(x,x) &= \Phi_{\bf m}(x^2), & x\in V.
\end{align*}

\subsubsection*{$I$- and $J$-Bessel function}

We define the \textit{$I$- and $J$-Bessel function} of parameter $\lambda>c(\Omega)$ by
\begin{align*}
 \calI_\lambda(z,w) &:= \sum_{{\bf m}\geq0}{\frac{d_{\bf m}}{(\frac{n}{r})_{\bf m}(\lambda)_{\bf m}}\Phi_{\bf m}(z,w)},\\
 \calJ_\lambda(z,w) &:= \sum_{{\bf m}\geq0}{(-1)^{|{\bf m}|}\frac{d_{\bf m}}{(\frac{n}{r})_{\bf m}(\lambda)_{\bf m}}\Phi_{\bf m}(z,w)},
\end{align*}
where
\begin{align*}
 (\alpha)_{\bf m} &:= \prod_{j=1}^r{\left(\lambda-(j-1)\frac{d}{2}\right)_{m_j}}
\end{align*}
denotes the \textit{generalized Pochhammer symbol}, expressed in terms of the classical Pochhammer symbols $(\alpha)_m=\alpha(\alpha+1)\cdots(\alpha+m-1)$. For $\lambda>c(\Omega)$ we have $(\lambda)_{\bf m}\neq0$ and hence all summands are non-singular. Convergence of the series for $z,w\in V_\CC$ is proved in \cite[Lemma 3.1]{Moe12}. The functions $\calI_\lambda(z,w)$ and $\calJ_\lambda(z,w)$ are holomorphic in $z$ and antiholomorphic in $w$.

\begin{example}
For $V=\RR$ the one-dimensional Jordan algebra we have for $z,w\in\CC=V_\CC$
\begin{align*}
 \calI_\lambda(z,w) &= \widetilde{I}_{\lambda-1}(2\sqrt{z\overline{w}}), & \calJ_\lambda(z,w) &= \widetilde{J}_{\lambda-1}(2\sqrt{z\overline{w}}),
\end{align*}
where $\widetilde{I}_\alpha(z)=(\frac{z}{2})^{-\alpha}I_\alpha(z)$ and $\widetilde{J}_\alpha(z)=(\frac{z}{2})^{-\alpha}J_\alpha(z)$ denote the renormalized $I$- and $J$-Bessel functions on $\CC$. Note that $\widetilde{I}_\alpha(z)$ and $\widetilde{J}_\alpha(z)$ are even entire functions and hence $\widetilde{I}_\alpha(2\sqrt{z})$ and $\widetilde{J}_\alpha(2\sqrt{z})$ are entire functions on $\CC$.
\end{example}

The following properties of the $I$- and $J$-Bessel functions can be found in \cite[Section 3]{Moe12}:

\begin{proposition}\label{prop:BesselDiffEq}
The $I$- and $J$-Bessel functions have the following properties:
\begin{enumerate}
\item They are $G_\CC$-invariant and antisymmetric in the sense that for $z,w\in V_\CC$ and $g\in G_\CC$ we have
\begin{align*}
 \calI_\lambda(gz,w) &= \calI_\lambda(z,g^*w), & \calJ_\lambda(gz,w) &= \calJ_\lambda(z,g^*w),\\
 \calI_\lambda(z,w) &= \overline{\calI_\lambda(w,z)}, & \calJ_\lambda(z,w) &= \overline{\calJ_\lambda(w,z)}.
\end{align*}
\item For both $u(z,w)=\calI_\lambda(z,w)$ and $u(z,w)=\calJ_\lambda(z,w)$ there exists a constant $C>0$ such that following estimate holds:
\begin{align*}
 |u(z,w)| &\leq C(1+|z|\cdot|w|)^{\frac{r(2n-1)}{4}}e^{2r\sqrt{|z|\cdot|w|}}, & z,w\in V_\CC.
\end{align*}
\item They solve the following differential equations
\begin{align*}
 (\calB_\lambda)_z\calI_\lambda(z,w) &= \overline{w}\calI_\lambda(z,w), & (\calB_\lambda)_z\calJ_\lambda(z,w) &= -\overline{w}\calJ_\lambda(z,w).
\end{align*}
\end{enumerate}
\end{proposition}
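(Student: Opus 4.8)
The plan is to derive all three statements directly from the defining power series, treating the coefficients $a_{\bf m}:=d_{\bf m}/((\tfrac{n}{r})_{\bf m}(\lambda)_{\bf m})$ as fixed scalars and transferring known properties of the spherical polynomials $\Phi_{\bf m}(z,w)$ term by term. The analytic input that legitimizes every rearrangement, differentiation, and interchange of the sum with the operators below is the local uniform convergence of the series on $V_\CC\times V_\CC$, which is precisely what part (2) supplies; so I would establish (2) first and then read off (1) and (3) from it.

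For part (1), the equivariance $\Phi_{\bf m}(gz,w)=\Phi_{\bf m}(z,g^*w)$ holds for each ${\bf m}$ with $g$-independent coefficients, so summing against $a_{\bf m}$ yields $\calI_\lambda(gz,w)=\calI_\lambda(z,g^*w)$, and likewise for $\calJ_\lambda$ since the sign $(-1)^{|{\bf m}|}$ does not depend on $g$. For the conjugation symmetry I would first check $\Phi_{\bf m}(z,w)=\overline{\Phi_{\bf m}(w,z)}$: the map $(z,w)\mapsto\overline{\Phi_{\bf m}(w,z)}$ is holomorphic in $z$, antiholomorphic in $w$, and restricts on the diagonal to $\overline{\Phi_{\bf m}(x^2)}=\Phi_{\bf m}(x^2)$ because $\Phi_{\bf m}$ is a real polynomial, so it coincides with $\Phi_{\bf m}(z,w)$ by uniqueness of the polarization. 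Since $a_{\bf m}\in\RR$ for $\lambda>c(\Omega)$, the symmetry passes to the series.

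For part (2) I would use the homogeneous estimate $|\Phi_{\bf m}(z,w)|\le(|z|\,|w|)^{|{\bf m}|}$, a consequence of the positive-definiteness of the kernel $\Phi_{\bf m}$ (Cauchy--Schwarz) together with $K$-invariance and $\Phi_{\bf m}(e)=1$. Summing then reduces to controlling $\sum_{\bf m}a_{\bf m}(|z|\,|w|)^{|{\bf m}|}$; grouping by $|{\bf m}|=k$ and invoking the expansion of the exponential $e^{(z|\overline{w})}$ into spherical polynomials from \cite{FK94}, namely $\sum_{|{\bf m}|=k}d_{\bf m}/(\tfrac{n}{r})_{\bf m}\,\Phi_{\bf m}(z,w)=(z|\overline{w})^k/k!$, the additional factor $1/(\lambda)_{\bf m}$ (of size roughly $(k!)^{-1}$ across degree $k$) converts the exponential growth into the Bessel-type growth $e^{2r\sqrt{|z|\,|w|}}$, with the polynomial prefactor $(1+|z|\,|w|)^{r(2n-1)/4}$ produced by the subleading terms; this is \cite[Lemma~3.1]{Moe12}.

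Part (3) is the substantive one. Since $\calB_\lambda=P(\tfrac{\partial}{\partial z})z+\lambda\tfrac{\partial}{\partial z}$ lowers the holomorphic degree by one, $(\calB_\lambda)_z\Phi_{\bf n}(z,w)$ is $V_\CC$-valued of $z$-degree $|{\bf n}|-1$ and $w$-degree $|{\bf n}|$; by the equivariance \eqref{eq:BesselEquivariance} together with the invariance of $\Phi_{\bf n}$ from part (1), this expression transforms exactly like $\overline{w}\,\Phi(z,w)$, which forces $(\calB_\lambda)_z\Phi_{\bf n}(z,w)=\sum_{|{\bf m}|=|{\bf n}|-1}b({\bf n},{\bf m})\,\overline{w}\,\Phi_{\bf m}(z,w)$. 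The key step, and the main obstacle, is the explicit determination of the constants $b({\bf n},{\bf m})$: this is a Jordan-algebraic computation of $P(\tfrac{\partial}{\partial z})$ and $\tfrac{\partial}{\partial z}$ on the generalized power functions $\Delta_{\bf m}$, carried out on a Jordan frame, whose $\lambda$-dependence enters through the Pochhammer factors, along the lines of \cite{FK94} and \cite{Moe12}. Once this lowering formula is available, substituting the series and matching the coefficient of $\overline{w}\,\Phi_{\bf m}(z,w)$ in each degree reduces the eigenvalue identity $(\calB_\lambda)_z\calI_\lambda(z,w)=\overline{w}\,\calI_\lambda(z,w)$ to a recurrence $\sum_{{\bf n}\supset{\bf m}}a_{\bf n}\,b({\bf n},{\bf m})=a_{\bf m}$ that becomes a standard Pochhammer identity after inserting the explicit $a_{\bf m}$; the ratios $a_{\bf n}/a_{\bf m}$ are built precisely to make it hold. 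The $J$-Bessel case is identical, the alternating sign $(-1)^{|{\bf m}|}$ introducing one extra sign per degree-lowering step and thereby turning the eigenvalue $+\overline{w}$ into $-\overline{w}$.
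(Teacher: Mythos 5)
The paper offers no proof of this proposition at all: it is imported wholesale from \cite[Section 3]{Moe12} (``The following properties \ldots can be found in \cite[Section 3]{Moe12}''), so your proposal must stand on its own as a self-contained argument. Parts (1) and (2) of your plan are essentially sound. In (1) the term-by-term transfer of equivariance is legitimate once locally uniform convergence is known, and the conjugation symmetry argument works, with one small omission: the uniqueness in \cite[Corollary XI.3.4]{FK94} is uniqueness within the class of kernels satisfying \emph{both} the diagonal condition \emph{and} the equivariance, so before invoking it you must also check that $\Psi(z,w):=\overline{\Phi_{\bf m}(w,z)}$ is equivariant; this does follow, from $\Phi_{\bf m}(w,gz)=\Phi_{\bf m}(g^*w,z)$ and $\overline{g^*}=g^\#$, but it needs to be said. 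Part (2) is in the end a citation of \cite[Lemma 3.1]{Moe12}, which is consistent with how the paper itself treats convergence.

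The genuine gap is part (3), which you yourself call ``the main obstacle'' and then do not close. Two ingredients are missing. First, the structural claim that $(\calB_\lambda)_z\Phi_{\bf n}(z,w)$ must lie in the span of the functions $\overline{w}\,\Phi_{\bf m}(z,w)$ with $|{\bf m}|=|{\bf n}|-1$: the observation that both sides obey the same transformation law $g^\#F(gz,w)=F(z,g^*w)$ does not by itself force this; you need a multiplicity-one statement (a Pieri-type decomposition of $\calP_{\bf n}(V_\CC)\otimes V_\CC$ into irreducibles with multiplicity at most one) together with the fact that the functions $\overline{w}\,\Phi_{\bf m}$ actually exhaust the resulting equivariant space. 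Second, and decisively, the coefficients $b({\bf n},{\bf m})$ are never computed, yet they carry all of the content: $\calB_\lambda=P(\frac{\partial}{\partial z})z+\lambda\frac{\partial}{\partial z}$ contributes two distinct lowering actions on $\Phi_{\bf n}$, and the eigenvalue identity holds only because the affine $\lambda$-dependence of $b({\bf n},{\bf m})$ matches the Pochhammer factor $(\lambda)_{\bf m}$ in the series coefficients. Asserting that ``the ratios $a_{\bf n}/a_{\bf m}$ are built precisely to make it hold'' is assuming the conclusion; the identity must be verified identically in $\lambda$, and that verification is exactly the hard Jordan-frame computation you defer. A complete argument would either carry out that computation or bypass it, e.g.\ by first proving the equation for $\calJ_\lambda$ from its Laplace-transform representation, under which $\calB_\lambda$ corresponds to a multiplication operator (this is the route of \cite[Chapter XV]{FK94} and \cite{Dib90}), and then obtaining the $\calI_\lambda$-equation from $\calI_\lambda(z,w)=\calJ_\lambda(-z,w)$ together with the equivariance \eqref{eq:BesselEquivariance} applied to $g=-\id$.
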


\begin{remark}\label{rem:MultivariableBesselFunctions}
Note that both $\calI_\lambda(z,w)$ and $\calJ(z,w)$ are because of Proposition \ref{prop:BesselDiffEq}~(1) uniquely determined by the functions $\calI_\lambda(x):=\calI_\lambda(x,e)$ and $\calJ_\lambda(x):=\calJ_\lambda(x,e)$. The $J$-Bessel function $\calJ_\lambda(x)$ has been studied before \cite{Dib90,FT87}. Since both $\calI_\lambda(x)$ and $\calJ_\lambda(x)$ are $K$-invariant and $\Omega=K\sum_{i=1}^r{\RR_+c_i}$ these functions are uniquely determined by their values on $\sum_{i=1}^r{\RR_+c_i}$. Now $K$ contains all permutations of $\{c_1,\ldots,c_r\}$ and hence one can view $\calI_\lambda(x)$ and $\calJ_\lambda(x)$ as symmetric functions of $r$ variables $a_1,\ldots,a_r\in\RR_+$ by $(a_1,\ldots,a_r)\mapsto\sum_{i=1}^r{a_ic_i}\in\Omega$. Therefore we can consider the $I$- and $J$-Bessel function on $\Omega$ as a multivariable generalization of the classical one-variable $I$- and $J$-Bessel function.
\end{remark}

\subsubsection*{$K$-Bessel function}

We define the \textit{$K$-Bessel function} of parameter $\lambda>c(\Omega)$ by the convergent integral
\begin{align*}
 \calK_\lambda(x) &:= \int_\Omega{e^{-\tr(u^{-1})-(x|u)}\Delta(u)^{\lambda-\frac{2n}{r}}\td u}, & x\in\Omega.
\end{align*}
The function $\calK_\lambda(x)$ is strictly positive on $\Omega$ and satisfies the differential equation
\begin{align*}
 \calB_\lambda\calK_\lambda(x) &= e\calK_\lambda(x), & x\in\Omega.
\end{align*}

\begin{example}
For $V=\RR$ the function $\calK_\lambda(x)$ is given by the renormalized classical $K$-Bessel function $\widetilde{K}_\alpha(z):=\left(\frac{z}{2}\right)^{-\alpha}K_\alpha(z)$ (see \cite[formula 3.471~(9)]{GR65}):
\begin{align*}
 \calK_\lambda(x) &= 2\widetilde{K}_{\lambda-1}(2\sqrt{x}), & x>0.
\end{align*}
\end{example}

Since $\calK_\lambda(x)$ is also $K$-invariant one can by Remark \ref{rem:MultivariableBesselFunctions} interpret it as a multivariable generalization of the classical one-variable $K$-Bessel function.

Using the decomposition $\Xi=U\Omega$ we define a $U$-invariant function $\omega_\lambda$ on $\Xi$ by
\begin{align*}
 \omega_\lambda(ux) &:= \calK_\lambda\left(\left(\frac{x}{2}\right)^2\right), & z\in U,x\in\Omega.
\end{align*}
Correspondingly the function $\omega_\lambda(z)$ satisfies the differential equation
\begin{align*}
 \calB_\lambda\omega_\lambda(z) &= \frac{\overline{z}}{4}\omega_\lambda(z), & z\in\Xi.
\end{align*}

\subsection{Segal--Bargmann transform and unitary inversion operator}\label{sec:SegalBargmannAndUnitaryInversion}

The group $\Aut(T_\Omega)$ of holomorphic automorphisms of the tube domain $T_\Omega:=V+i\Omega\subseteq V_\CC$ is a Hermitian Lie group. Its scalar type holomorphic discrete series can be analytically continued to a family of highest weight representations of its universal covering group $\widetilde{\Aut(T_\Omega)}$. These are parameterized by the so-called \textit{Wallach set} whose continuous part belongs to highest weight representations realized on $L^2(\Omega,\td\mu_\lambda)$ for $\lambda>c(\Omega)$. The longest Weyl group element essentially acts in this realization as the \textit{unitary inversion operator} $\calU_\lambda$ on $L^2(\Omega,\td\mu_\lambda)$ which is the unitary involutive isomorphism given by (see \cite[Theorem 6.3]{Moe12})
\begin{align}
 \calU_\lambda f(x) &= 2^{-r\lambda}\int_\Omega{\calJ_\lambda(x,y)f(y)\td\mu(y)}.\label{eq:DefUnitaryInversion}
\end{align}

In \cite{HKMO12,Moe12} there was established another realization of the same representations on a weighted Bergmann space on $\Xi$. Since $\Xi$ is open dense in $V_\CC$ every holomorphic function on $V_\CC$ is already uniquely determined by its values on $\Xi$. We let $\calO(\overline{\Xi})$ be the space of restrictions of holomorphic functions on $V_\CC$ to $\Xi$. For $F,G\in\calO(\overline{\Xi})$ put
\begin{align*}
 \langle F,G\rangle_\lambda &:= \frac{1}{2^{3r\lambda}\Gamma_\Omega(\frac{n}{r})}\int_\Xi{F(z)\overline{G(z)}\omega_\lambda(z)\td\nu_\lambda(z)}
\end{align*}
whenever the integral converges. Let $\calF_\lambda(\Xi)$ be the space of $F\in\calO(\overline{\Xi})$ such that $\langle F,F\rangle_\lambda<\infty$ and endow it with the inner product $\langle-,-\rangle_\lambda$.

\begin{theorem}[{\cite[Theorems 4.15, 5.4 \&\ 5.7]{Moe12}}]\label{thm:FockSpace}
$\calF_\lambda(\Xi)$ is a reproducing kernel Hilbert space with the following properties:
\begin{enumerate}
\item[(a)] The reproducing kernel of $\calF_\lambda(\Xi)$ is given by $\KK_\lambda(z,w)=\calI_\lambda\left(\frac{z}{2},\frac{w}{2}\right)$,
\item[(b)] The space $\calP(V_\CC)$ of holomorphic polynomials on $V_\CC$ is dense in $\calF_\lambda(\Xi)$.
\item[(c)] The \textit{Segal--Bargmann transform} $\BB_\lambda:L^2(\Omega,\td\mu_\lambda)\to\calF_\lambda(\Xi)$ given by
\begin{align*}
 \BB_\lambda f(z) &:= e^{-\frac{1}{2}\tr(z)}\int_\Omega{\calI_\lambda(z,x)e^{-\tr(x)}f(x)\td\mu_\lambda(x)}.
\end{align*}
is a unitary isomorphism.
\end{enumerate}
\end{theorem}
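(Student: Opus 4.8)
The three statements are governed by the decomposition of holomorphic polynomials into $U$-irreducible pieces, so the plan is to pin down the orthogonal structure of the spherical polynomials in $\calF_\lambda(\Xi)$ first and then read off (a), (b) and (c). Recall from the Faraut--Kor\'anyi theory that $\calP(V_\CC)=\bigoplus_{{\bf m}\geq0}\calP_{\bf m}$ as a $U$-module, where $\calP_{\bf m}$ is the irreducible subspace spanned by $z\mapsto\Delta_{\bf m}(gz)$, $g\in G_\CC$, and $\Phi_{\bf m}(z,w)$ is, up to a scalar, the reproducing kernel of $\calP_{\bf m}$ for its essentially unique $U$-invariant inner product, with diagonal restriction the $K$-spherical vector $\Phi_{\bf m}(z)=\Phi_{\bf m}(z,e)$. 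Since both $\td\nu_\lambda$ and the weight $\omega_\lambda$ are $U$-invariant, the form $\langle-,-\rangle_\lambda$ is $U$-invariant; hence the $\calP_{\bf m}$ are mutually orthogonal in $\calF_\lambda(\Xi)$ and on each one $\langle-,-\rangle_\lambda$ is a positive multiple $c_{\bf m}^{-1}$ of that invariant inner product. Once density of polynomials is known, the reproducing kernel is therefore forced to be $\sum_{\bf m}c_{\bf m}\Phi_{\bf m}(z,w)$, and the whole of (a) reduces to computing the constants $c_{\bf m}$.

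The crux is the evaluation of $\langle\Phi_{\bf m},\Phi_{\bf m}\rangle_\lambda$. Writing $\Xi=U\Omega$ and using the definition \eqref{eq:EquivMeasuresOnXi} of $\td\nu_\lambda$ together with $\omega_\lambda(ux^{1/2})=\calK_\lambda(\tfrac{x}{4})$, the integral over $\Xi$ collapses to
\[
 \langle\Phi_{\bf m},\Phi_{\bf m}\rangle_\lambda=\frac{1}{2^{3r\lambda}\Gamma_\Omega(\tfrac{n}{r})}\int_\Omega\left(\int_U|\Phi_{\bf m}(ux^{1/2})|^2\,\td u\right)\calK_\lambda\!\left(\tfrac{x}{4}\right)\td\mu_\lambda(x).
\]
By Schur orthogonality inside the irreducible $\calP_{\bf m}$ and the identity $\Phi_{\bf m}(y,y)=\Phi_{\bf m}(y^2)$, the inner $U$-average is a constant multiple (involving $d_{\bf m}$) of $\Phi_{\bf m}(x)$, so everything reduces to the single radial integral $\int_\Omega\Phi_{\bf m}(x)\calK_\lambda(\tfrac{x}{4})\td\mu_\lambda(x)$. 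Inserting the defining integral for $\calK_\lambda$ and applying the Gindikin--Faraut--Kor\'anyi Gamma integral for $\int_\Omega\Delta_{\bf m}\Delta^{s}e^{-\tr}\,\td x$ over the cone produces a ratio of $\Gamma_\Omega$-factors that telescopes into the generalized Pochhammer symbols. Comparing with the coefficients $\tfrac{d_{\bf m}}{(\frac{n}{r})_{\bf m}(\lambda)_{\bf m}}$ in the definition of $\calI_\lambda$ and tracking the rescaling $z\mapsto z/2$ then yields $c_{\bf m}=\tfrac{d_{\bf m}}{(\frac{n}{r})_{\bf m}(\lambda)_{\bf m}}2^{-2|{\bf m}|}$ and hence $\KK_\lambda(z,w)=\calI_\lambda(z/2,w/2)$. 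I expect the correct bookkeeping of all normalization constants ($2^{3r\lambda}$, $\Gamma_\Omega$, the factors $\tfrac12,\tfrac14$) in this Gamma-integral, rather than any conceptual point, to be the main obstacle.

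For density (b) I would expand an arbitrary $F\in\calF_\lambda(\Xi)\subseteq\calO(\overline{\Xi})$ into its isotypic (Taylor) components $F=\sum_{\bf m}F_{\bf m}$ with $F_{\bf m}\in\calP_{\bf m}$. Orthogonality gives $\|F\|_\lambda^2=\sum_{\bf m}\|F_{\bf m}\|_\lambda^2$, so the polynomial partial sums are Cauchy and converge in $\calF_\lambda(\Xi)$ to some $\widetilde F$. On polynomials the reproducing identity $F(w)=\langle F,\calI_\lambda(\cdot/2,w/2)\rangle_\lambda$ holds by the previous computation, and $\|\calI_\lambda(\cdot/2,w/2)\|_\lambda^2=\calI_\lambda(w/2,w/2)<\infty$ by the growth estimate of Proposition~\ref{prop:BesselDiffEq}(2); thus point evaluations are continuous and extend to all of $\calF_\lambda(\Xi)$, so norm convergence implies pointwise convergence and forces $\widetilde F=F$. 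This simultaneously proves completeness of $\calF_\lambda(\Xi)$ and that its reproducing kernel is the series computed above, completing (a) and (b).

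Finally (c). The kernel $\calI_\lambda(z,x)$ is $G_\CC$-invariant, so using $k^*=k^{-1}$ for $k\in K$ together with the $K$-invariance of $\tr$ and of $\td\mu_\lambda$ one checks that $\BB_\lambda$ is $K$-equivariant; hence, via the matching Laguerre decomposition of $L^2(\Omega,\td\mu_\lambda)$, it carries the $K$-isotypic component of type $\calP_{\bf m}$ to $\calP_{\bf m}\subseteq\calF_\lambda(\Xi)$ and by Schur's lemma acts there by a single scalar times a fixed unitary intertwiner. It therefore suffices to test $\BB_\lambda$ on one $K$-spherical vector per type: I would take the generalized Laguerre functions $\ell_{\bf m}(x)=\calL_{\bf m}^\lambda(x)e^{-\tr(x)}$, which are $K$-invariant and orthogonal in $L^2(\Omega,\td\mu_\lambda)$, and verify by a term-by-term integration of the series for $\calI_\lambda$ against the Laguerre orthogonality relations that
\[
 \BB_\lambda\ell_{\bf m}(z)=\gamma_{\bf m}\,\Phi_{\bf m}(z)
\]
for explicit constants $\gamma_{\bf m}$. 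Comparing $|\gamma_{\bf m}|^2\langle\Phi_{\bf m},\Phi_{\bf m}\rangle_\lambda$ with the Laguerre norm $\|\ell_{\bf m}\|_{L^2(\td\mu_\lambda)}^2$ (both available from the Gamma computations above) shows that $\BB_\lambda$ is isometric on each spherical line, hence, by the Schur reduction and proportionality of the invariant inner products, isometric on every isotypic component and so on all of $L^2(\Omega,\td\mu_\lambda)$. Since its image already contains every $\Phi_{\bf m}$, which span $\calF_\lambda(\Xi)$ densely by (b), $\BB_\lambda$ is onto and therefore a unitary isomorphism. Equivalently, one can phrase the whole of (c) as the $I$-Bessel product formula $\int_\Omega\calI_\lambda(z,x)\overline{\calI_\lambda(w,x)}e^{-2\tr(x)}\td\mu_\lambda(x)=e^{\frac12(\tr(z)+\overline{\tr(w)})}\calI_\lambda(z/2,w/2)$, which is essentially equivalent to the norm computation of the second paragraph.
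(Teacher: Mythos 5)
You should first note that the paper under review does not prove this theorem at all: it imports it wholesale from \cite[Theorems 4.15, 5.4, 5.7]{Moe12}, and the paper itself tells us how the unitarity in (c) is obtained there, namely representation-theoretically --- $L^2(\Omega,\td\mu_\lambda)$ and $\calF_\lambda(\Xi)$ carry isomorphic irreducible unitary highest weight representations of $\widetilde{\Aut(T_\Omega)}$ and $\BB_\lambda$ intertwines them. Your treatment of (a) and (b) is essentially sound and follows the standard computational route ($U$-invariance of $\langle-,-\rangle_\lambda$, Schur orthogonality over $U$, unfolding $\int_\Xi$ into $\int_\Omega\int_U$ via \eqref{eq:EquivMeasuresOnXi}, Gindikin's Gamma integral), with the correct target $c_{\bf m}=\frac{d_{\bf m}}{(\frac{n}{r})_{\bf m}(\lambda)_{\bf m}}4^{-|{\bf m}|}$. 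The genuine gap is in (c). Your main argument --- $K$-equivariance of $\BB_\lambda$ plus Schur's lemma --- cannot work, because $K$-types occur with \emph{infinite multiplicity} in both spaces: $\calP_{\bf m}$ is an irreducible $U$-module, not a $K$-module, and as a $K$-module it contains the trivial type (its spherical vector $\Phi_{\bf m}$) for \emph{every} ${\bf m}$. Hence ``the $K$-isotypic component of type $\calP_{\bf m}$'' is not a meaningful object, and Schur's lemma for $K$ only says that a $K$-equivariant operator acts on each $K$-isotypic component as $\mathrm{identity}\otimes A$ with $A$ an arbitrary operator on an infinite-dimensional multiplicity space --- no scalar conclusion. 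The ``matching Laguerre decomposition'' of $L^2(\Omega,\td\mu_\lambda)$ you invoke is the isotypic decomposition under the maximal compact subgroup of the big group $\widetilde{\Aut(T_\Omega)}$; note that the naive candidate $\{p(x)e^{-\tr(x)}:p\in\calP_{\bf m}\}$ is \emph{not} an orthogonal decomposition (already for $V=\RR$ monomials times $e^{-x}$ are never orthogonal on $\RR_+$), so even defining it, let alone proving that $\BB_\lambda$ respects it, requires exactly the intertwining structure you are trying to bypass. What your Laguerre computation $\BB_\lambda\ell_{\bf m}=\gamma_{\bf m}\Phi_{\bf m}$ with matching norms honestly gives is isometry of $\BB_\lambda$ on the $K$-invariant subspace $L^2(\Omega,\td\mu_\lambda)^K$ only.

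Your fallback --- the product formula $\int_\Omega\calI_\lambda(z,x)\overline{\calI_\lambda(w,x)}e^{-2\tr(x)}\td\mu_\lambda(x)=e^{\frac12(\tr(z)+\overline{\tr(w)})}\calI_\lambda(z/2,w/2)$ --- would indeed close the gap: the coherent states $\calI_\lambda(w,\cdot)e^{-\tr}$ span a dense subspace of $L^2(\Omega,\td\mu_\lambda)$ (orthogonality to all of them forces orthogonality to $\calP(\Omega)e^{-\tr(x)}$, which is dense), and isometry on them plus dense range yields unitarity. But this formula is \emph{not} ``essentially equivalent to the norm computation of the second paragraph.'' In the norm computation the $U$-average kills all cross terms between different signatures; on the $\Omega$-side of the product formula no $U$-average is available, and the cross terms $\int_\Omega\Phi_{\bf m}(z,x)\Phi_{{\bf m}'}(x,w)e^{-2\tr(x)}\td\mu_\lambda(x)$ with ${\bf m}\neq{\bf m}'$ do not vanish (rank one: $\int_0^\infty x^{m+m'}e^{-2x}x^{\lambda-1}\td x\neq0$); they only cancel after a genuine resummation, which in higher rank is the multivariable Laguerre generating identity of Faraut--Kor\'anyi, Chapter XV. The alternative is to deduce the product formula from the intertwining relation \eqref{eq:IntRelSBTrafoUnitInv} --- which is precisely what the present paper does in Lemma \ref{lem:RepPropIBessel}, and that relation is itself imported from \cite[Proposition 6.6]{Moe12}, i.e.\ from the representation theory whose role your proposal underestimates. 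So either you supply the Laguerre generating identity, or you prove the Lie algebra intertwining property of $\BB_\lambda$; as written, part (c) is not proved.
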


There exists a unitary representation of $\widetilde{\Aut(T_\Omega)}$ on the Hilbert space $\calF_\lambda(\Xi)$ which is isomorphic to the unitary highest weight representation on $L^2(\Omega,\td\mu_\lambda)$. In fact, the two models $L^2(\Omega,\td\mu_\lambda)$ and $\calF_\lambda(\Xi)$ are related by the Segal--Bargmann transform in the sense that it intertwines the group actions. We only use the intertwining property for the longest Weyl group element which acts on $\calF_\lambda(\Xi)$ essentially as
\begin{align*}
 (-1)^*F(z) &:= F(-z), & F\in\calF_\lambda(\Xi).
\end{align*}
This corresponds to the following intertwining relation (see \cite[Proposition 6.6]{Moe12}):
\begin{align}
 \BB_\lambda\circ\calU_\lambda &= (-1)^*\circ\BB_\lambda.\label{eq:IntRelSBTrafoUnitInv}
\end{align}
Using this intertwining relation we derive two integral formulas for the $I$-Bessel function:

\begin{lemma}\label{lem:RepPropIBessel}
\begin{enumerate}
\item For $z\in V_\CC$ we have
\begin{align*}
 2^{-r\lambda}\int_\Omega{e^{-\tr(\xi)}\calI_\lambda(z,\xi)\td\mu_\lambda(\xi)} &= e^{\tr(z)}.
\end{align*}
\item For $x\in\Omega$ and $z\in V_\CC$ the we have the reproducing identity
\begin{align*}
 2^{-r\lambda}\int_\Omega{e^{-\tr(\xi)}\calI_\lambda(z,\xi)\calI_\lambda(-x,\xi)\td\mu_\lambda(\xi)} &= e^{\tr(z)-\tr(x)}\calI_\lambda(-z,x).
\end{align*}
\end{enumerate}
\end{lemma}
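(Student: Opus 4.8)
The plan is to extract both formulas from the intertwining relation \eqref{eq:IntRelSBTrafoUnitInv}, $\BB_\lambda\circ\calU_\lambda=(-1)^*\circ\BB_\lambda$, after re-expressing the kernel of $\calU_\lambda$ through $\calI_\lambda$. I first record the elementary identity $\calJ_\lambda(z,w)=\calI_\lambda(-z,w)$: comparing the defining series, the sign $(-1)^{|{\bf m}|}$ is precisely accounted for by $\Phi_{\bf m}(-z,w)=(-1)^{|{\bf m}|}\Phi_{\bf m}(z,w)$, which holds because $\Phi_{\bf m}(\cdot,w)$ is homogeneous of degree $|{\bf m}|$. Formula \eqref{eq:DefUnitaryInversion} then reads $\calU_\lambda f(x)=2^{-r\lambda}\int_\Omega\calI_\lambda(-x,y)f(y)\,\td\mu_\lambda(y)$.

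For part~(2) I fix $z\in V_\CC$ and put $g_z(\xi):=\calI_\lambda(z,\xi)e^{-\tr\xi}$. By the estimate in Proposition~\ref{prop:BesselDiffEq}~(2) together with $\tr\xi\ge|\xi|$ one has $g_z\in L^2(\Omega,\td\mu_\lambda)$, so the Segal--Bargmann transform of Theorem~\ref{thm:FockSpace}~(c) can be written as $\BB_\lambda h(z)=e^{-\frac12\tr z}\int_\Omega h\,g_z\,\td\mu_\lambda$ for $h\in L^2$. Applying this to $h=\calU_\lambda f$ and using that $\calU_\lambda$ is self-adjoint with a real kernel (so that it commutes with complex conjugation and hence $\int_\Omega(\calU_\lambda f)\,g_z\,\td\mu_\lambda=\int_\Omega f\,(\calU_\lambda g_z)\,\td\mu_\lambda$) gives $\BB_\lambda\calU_\lambda f(z)=e^{-\frac12\tr z}\int_\Omega f(y)\,\calU_\lambda g_z(y)\,\td\mu_\lambda(y)$. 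On the other side, $(-1)^*\BB_\lambda f(z)=\BB_\lambda f(-z)=e^{\frac12\tr z}\int_\Omega f(y)\,\calI_\lambda(-z,y)e^{-\tr y}\,\td\mu_\lambda(y)$. Since \eqref{eq:IntRelSBTrafoUnitInv} makes these equal for every $f\in L^2$, the two kernels agree in $L^2(\Omega,\td\mu_\lambda)$, hence pointwise by continuity:
\[ \calU_\lambda g_z(x)=e^{\tr z}\calI_\lambda(-z,x)e^{-\tr x}. \]
Inserting $\calU_\lambda g_z(x)=2^{-r\lambda}\int_\Omega\calI_\lambda(z,\xi)\calI_\lambda(-x,\xi)e^{-\tr\xi}\td\mu_\lambda(\xi)$ (the integral converging absolutely by Proposition~\ref{prop:BesselDiffEq}~(2)) is exactly the assertion of part~(2).

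Part~(1) I then obtain as the limit $x\to0$ in $\Omega$ of part~(2): there $\calI_\lambda(0,\xi)=1=\calI_\lambda(-z,0)$ (only the term ${\bf m}=0$ survives) and $e^{-\tr x}\to1$, while the left-hand integrand is dominated for $|x|\le1$ by an integrable majorant (again by Proposition~\ref{prop:BesselDiffEq}~(2) and $\tr\xi\ge|\xi|$), so dominated convergence yields $2^{-r\lambda}\int_\Omega e^{-\tr\xi}\calI_\lambda(z,\xi)\td\mu_\lambda(\xi)=e^{\tr z}$. As a cross-check one may note that part~(1) is equivalent, after the substitution $\xi\mapsto2\xi$, to the statement $\BB_\lambda\psi_0=\mathbf{1}$ for the vacuum $\psi_0(x)=e^{-\tr x}$.

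The one genuinely delicate point is the transition from the operator identity to the scalar integral identity. A direct Fubini argument is unpleasant because the pointwise majorant for $\calU_\lambda g_z$ coming from Proposition~\ref{prop:BesselDiffEq}~(2) grows exponentially in $|x|$ and is not integrable against $\td\mu_\lambda$; the actual decay of $\calU_\lambda g_z$ is invisible at the level of that bound because of cancellation in the oscillatory $\calJ$-kernel. Keeping the computation at the Hilbert-space level circumvents this entirely: since $g_z\in L^2$ and $\calU_\lambda$ is unitary, $\calU_\lambda g_z$ automatically lies in $L^2$, so the comparison of kernels is a true equality of $L^2$-functions rather than a merely formal one. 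This is the step I expect to require the most care to state cleanly.
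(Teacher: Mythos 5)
Your proof is correct and follows essentially the same route as the paper: both arguments extract identity (2) from the intertwining relation \eqref{eq:IntRelSBTrafoUnitInv}, rewrite the kernel $\calJ_\lambda(x,y)$ of $\calU_\lambda$ as $\calI_\lambda(-x,y)$, and compare the resulting integral kernels, with (1) then obtained by specializing to $x=0$. The only divergence is technical bookkeeping --- the paper tests against $\varphi\in C_c^\infty(\Omega)$ and interchanges the double integral, whereas you move $\calU_\lambda$ across an $L^2$-pairing using its self-adjointness and real kernel --- which is a legitimate variant of the same argument.
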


\begin{proof}
\begin{enumerate}
\item Since $\calI_\lambda(0,z)=1$ this immediately follows from (2) by putting $x=0$.
\item For any $\varphi\in C_c^\infty(\Omega)$ and $z\in V_\CC$ we have
\begin{align*}
 \BB_\lambda\calU_\lambda\varphi(z) &= e^{-\frac{1}{2}\tr(z)}\int_\Omega{\calI_\lambda(z,x)e^{-\tr(x)}\calU_\lambda\varphi(x)\td\mu_\lambda(x)}\\
 &= 2^{-r\lambda}e^{-\frac{1}{2}\tr(z)}\int_\Omega{\int_\Omega{\calI_\lambda(z,x)\calJ_\lambda(x,y)e^{-\tr(x)}\varphi(y)\td\mu_\lambda(y)}\td\mu_\lambda(x)}\\
 &= 2^{-r\lambda}e^{-\frac{1}{2}\tr(z)}\int_\Omega{\int_\Omega{e^{-\tr(x)}\calI_\lambda(z,x)\calI_\lambda(-x,y)\td\mu_\lambda(x)}\varphi(y)\td\mu_\lambda(y)}.
\end{align*}
On the other hand, using the intertwining relation \eqref{eq:IntRelSBTrafoUnitInv} we obtain
\begin{align*}
 \BB_\lambda\calU_\calO\varphi(z) &= \BB_\lambda\varphi(-z)\\
 &= e^{\frac{1}{2}\tr(z)}\int_\Omega{\calI_\lambda(-z,y)e^{-\tr(y)}\varphi(y)\td\mu_\lambda(y)}.
\end{align*}
Therefore, the integral kernels have to coincide, which gives
\begin{align*}
 2^{-r\lambda}e^{-\frac{1}{2}\tr(z)}\int_\Omega{e^{-\tr(x)}\calI_\lambda(z,x)\calI_\lambda(-x,y)\td\mu_\lambda(x)} &= e^{\frac{1}{2}\tr(z)}\calI_\lambda(-z,y)e^{-\tr(y)}.
\end{align*}
This is the claimed formula.\qedhere
\end{enumerate}
\end{proof}
\section{The heat kernel transform}

In this section we investigate the heat equation on $\Omega$ corresponding to the second order differential operator $B_\lambda$, find the heat kernel and characterize the image of the heat kernel transform. We also recover the heat kernel transform via the restriction principle.

\subsection{An elliptic differential operator}

Consider the second order differential operator $B_\lambda$ on $\Omega$ defined in Section \ref{sec:BesselOperators}.

\begin{proposition}
\begin{enumerate}
\item $B_\lambda$ is a $K$-invariant elliptic second order operator on $\Omega$.
\item For $\lambda>c(\Omega)$ the operator $B_\lambda$ extends to a self-adjoint operator on $L^2(\Omega,\td\mu_\lambda)$.
\end{enumerate}
\end{proposition}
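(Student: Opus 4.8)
The plan for part (1) is to compute the principal symbol of $B_\lambda$ from its coordinate expression in Section \ref{sec:BesselOperators} and to recognize it as a Jordan-theoretic quantity that is positive on $\Omega$. Writing a covector as $\xi=\sum_\alpha\xi_\alpha e_\alpha\in V$ and using bilinearity of the polarized quadratic representation together with $P(\xi,\xi)=P(\xi)$, the second-order part of $B_\lambda$ has principal symbol
\[
 \sigma(x,\xi)=\sum_{\alpha,\beta}\xi_\alpha\xi_\beta\,(e|P(e_\alpha,e_\beta)x)=(e|P(\xi)x).
\]
I would then invoke the standard Jordan identities that $P(\xi)$ is self-adjoint for $(-|-)$ and that $P(\xi)e=\xi^2$ to rewrite this as $\sigma(x,\xi)=(\xi^2|x)$. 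For $\xi\neq0$ the element $\xi^2$ is a nonzero square, hence lies in $\overline{\Omega}\setminus\{0\}$ (nonzero by formal reality of $V$), so self-duality of $\Omega$ forces $(\xi^2|x)>0$ for every $x\in\Omega$; thus $\sigma(x,\xi)\neq0$ and $B_\lambda$ is elliptic on $\Omega$. For $K$-invariance I would specialize the equivariance relation \eqref{eq:BesselEquivariance} to $g\in K$: since $g^\#$ is the transpose with respect to $(-|-)$, pairing with $e$ gives $\ell(g)B_\lambda\ell(g^{-1})=(ge|\calB_\lambda)$, which equals $(e|\calB_\lambda)=B_\lambda$ because $K$ fixes $e$.

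For part (2) the plan is to avoid a direct deficiency-index computation and instead read self-adjointness off the unitary representation theory recalled in Section \ref{sec:SegalBargmannAndUnitaryInversion}. In the Kantor--Koecher--Tits grading $\frakg=V\oplus\str(V)\oplus V$ of the conformal algebra $\aut(T_\Omega)$, the operator $iB_\lambda$ is, up to a real scalar, the image under $d\pi$ of the generator of the opposite unipotent subgroup attached to $e\in V$; this is precisely the sense in which the Bessel operators enter the Lie algebra action. Since $\pi$ is unitary, the one-parameter subgroup $\exp(tX)$ acts by unitaries, so its generator $d\pi(X)=iB_\lambda$ is essentially skew-adjoint on the smooth vectors. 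Consequently $B_\lambda$ is symmetric and essentially self-adjoint, and its closure is the asserted self-adjoint operator on $L^2(\Omega,\td\mu_\lambda)$. I would take $\calP(\Omega)e^{-\tr(x)}$, which is dense and $B_\lambda$-invariant (applying $B_\lambda$ to a polynomial times $e^{-\tr(x)}$ returns a function of the same form), as an explicit core; these Laguerre-type functions are finite under the maximal compact subgroup and hence analytic vectors, so essential self-adjointness already holds there by Nelson's theorem.

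I expect the main obstacle to be exactly this upgrade from symmetry to self-adjointness, i.e.\ controlling the domain. Symmetry of $B_\lambda$ on $\calP(\Omega)e^{-\tr(x)}$ (or on $C_c^\infty(\Omega)$) is in principle a routine integration by parts against the weight $\Delta(x)^{\lambda-\frac{n}{r}}$, using $\frac{\partial}{\partial x}\log\Delta(x)=x^{-1}$ and the self-adjointness of $P$; what is not routine is ruling out nontrivial deficiency spaces. If one prefers to stay inside analysis rather than invoke the global representation, the fallback is to establish symmetry by hand and then prove essential self-adjointness directly by verifying that $\calP(\Omega)e^{-\tr(x)}$ consists of analytic vectors for $B_\lambda$ — for instance by exhibiting the $\su(1,1)$-triple spanned by multiplication by $\tr(x)$, the Euler operator, and $B_\lambda$, and arguing that the Laguerre functions are analytic for the associated compact generator.
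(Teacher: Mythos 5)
Your proposal is correct and follows essentially the same route as the paper: for (1) the identical principal-symbol computation $(e|P(\xi)x)=(P(\xi)e|x)=(\xi^2|x)$ (the paper concludes positivity via positive definiteness of $L(x)$, citing Faraut--Kor\'anyi, while you conclude via formal reality and self-duality of $\Omega$ --- equivalent facts), and the same one-line $K$-invariance argument from \eqref{eq:BesselEquivariance} and $ke=e$; for (2) the same appeal to unitary representation theory, where $iB_\lambda$ is the derived action of a Lie algebra element of the conformal group and hence essentially skew-adjoint (the paper simply cites [ADO06, Theorem 3.4] for this, whereas you reconstruct the statement from the Kantor--Koecher--Tits grading and add the core $\calP(\Omega)e^{-\tr(x)}$ of analytic vectors with Nelson's theorem). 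These additions are sound but supply detail the paper delegates to its citation rather than a genuinely different method.
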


\begin{proof}
\begin{enumerate}
\item $K$-invariance of $B_\lambda$ is clear by \eqref{eq:BesselEquivariance} since $K$ fixes the identity element $e$ of $V$. To show ellipticity we identify for $x\in\Omega$ the cotangent space $T_x^*\Omega$ with $V$. Then the principal symbol of $B_\lambda$ at $x\in\Omega$ in direction $\xi\in T_x^*\Omega$ is given by
\begin{align*}
 (P(\xi)x|e) &= (x|P(\xi)e) = (x|\xi^2) = (L(x)\xi|\xi).
\end{align*}
By \cite[Proposition III.2.2]{FK94} the operator $L(x)$ is positive definite since $x\in\Omega$ and hence the claim follows.
\item By \cite[Theorem 3.4]{ADO06} the operator $iB_\lambda$ appears in the Lie algebra action of a unitary representation on the Hilbert space $L^2(\Omega,\td\mu_\lambda)$. Therefore, $iB_\lambda$ extends to a skew-adjoint operator on $L^2(\Omega,\td\mu_\lambda)$ which shows the claim.\qedhere
\end{enumerate}
\end{proof}

The spectral decomposition of the operator $B_\lambda$ can also be written down explicitly in terms of the $J$-Bessel functions $\calJ_\lambda(x,y)$ defined in Section \ref{sec:BesselFunctions}. Recall from Proposition \ref{prop:BesselDiffEq} that $\calJ_\lambda(-,y)$ is an eigenfunction of $B_\lambda$ to the eigenvalue $-\tr(y)$.

\begin{proposition}\label{prop:SpectralDecompBessel}
\begin{enumerate}
\item The spectrum of $B_\lambda$ on $L^2(\Omega,\td\mu_\lambda)$ is purely continuous and given by $(-\infty,0]$.
\item The spectral decomposition of $B_\lambda$ is given by
\begin{align*}
 f(x) &= 2^{-r\lambda}\int_\Omega{\calJ_\lambda(x,y)\calU_\lambda f(y)\td\mu_\lambda(y)},
\end{align*}
where $\calU_\lambda$ is the unitary isomorphism on $L^2(\Omega,\td\mu_\lambda)$ defined in \eqref{eq:DefUnitaryInversion}. In particular $\calU_\lambda$ intertwines the differential operator $B_\lambda$ with the multiplication operator $-\tr(x)$.
\end{enumerate}
\end{proposition}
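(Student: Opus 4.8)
The plan is to diagonalise $B_\lambda$ through the unitary inversion operator $\calU_\lambda$, thereby reducing everything to an elementary multiplication operator. Concretely, I would first prove the ``in particular'' assertion of part (2), namely that $\calU_\lambda B_\lambda\calU_\lambda^{-1}$ is the multiplication operator $M_{-\tr}$ by $-\tr(x)$ on $L^2(\Omega,\td\mu_\lambda)$; part (1) then follows from the spectral theory of multiplication operators, and the explicit inversion formula in part (2) is nothing but the involutivity $\calU_\lambda^2=\id$ applied to $\calU_\lambda f$.

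For the intertwining relation I would work on the dense domain $C_c^\infty(\Omega)$, a core for the self-adjoint operator $B_\lambda$. For $f\in C_c^\infty(\Omega)$ and $x\in\Omega$ write
\begin{align*}
 \calU_\lambda(B_\lambda f)(x) &= 2^{-r\lambda}\int_\Omega{\calJ_\lambda(x,y)(B_\lambda f)(y)\td\mu_\lambda(y)}.
\end{align*}
The differential expression $B_\lambda$ is formally self-adjoint with respect to $\td\mu_\lambda$, so since $f$ has compact support no boundary terms arise and I may transfer $B_\lambda$ onto the kernel in the $y$-variable:
\begin{align*}
 \calU_\lambda(B_\lambda f)(x) &= 2^{-r\lambda}\int_\Omega{\big((B_\lambda)_y\calJ_\lambda(x,y)\big)f(y)\td\mu_\lambda(y)}.
\end{align*}
Using the symmetry $\calJ_\lambda(x,y)=\overline{\calJ_\lambda(y,x)}$ of Proposition \ref{prop:BesselDiffEq}~(1), the reality of the coefficients of $B_\lambda$, and the eigenvalue equation $(B_\lambda)_y\calJ_\lambda(y,x)=-\tr(x)\calJ_\lambda(y,x)$ recalled before the statement, one obtains $(B_\lambda)_y\calJ_\lambda(x,y)=-\tr(x)\calJ_\lambda(x,y)$. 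Pulling the factor $-\tr(x)$ out of the integral gives $\calU_\lambda(B_\lambda f)=-\tr(x)\cdot\calU_\lambda f=M_{-\tr}\calU_\lambda f$. Since $\calU_\lambda$ is unitary, $\calU_\lambda B_\lambda\calU_\lambda^{-1}$ is self-adjoint and agrees with the self-adjoint operator $M_{-\tr}$ on the core $\calU_\lambda(C_c^\infty(\Omega))$; by maximality of self-adjoint operators the two coincide.

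With the intertwining in hand, part (1) is read off from $M_{-\tr}$. Because $\tr$ is a linear functional strictly positive on $\Omega$ and $\Omega$ is invariant under dilations, $\tr(\Omega)=(0,\infty)$, so $-\tr$ has range $(-\infty,0)$; its essential range with respect to $\td\mu_\lambda$ is the closure $(-\infty,0]$, the value $0$ being included because $\{x\in\Omega:\tr(x)<\epsilon\}$ is a nonempty open set and hence of positive $\td\mu_\lambda$-measure for every $\epsilon>0$. Thus $\spec(B_\lambda)=\spec(M_{-\tr})=(-\infty,0]$. The spectrum is purely continuous: an eigenvalue $c$ of $M_{-\tr}$ would force the level set $\{x\in\Omega:\tr(x)=-c\}$ to have positive measure, but this set lies in an affine hyperplane and is therefore a null set for $\td\mu_\lambda$, which is absolutely continuous with respect to Lebesgue measure on $\Omega$; hence $M_{-\tr}$, and with it $B_\lambda$, has no point spectrum.

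Finally, the displayed formula in part (2) is exactly $\calU_\lambda(\calU_\lambda f)(x)=f(x)$, valid since $\calU_\lambda$ is an involution; read through the intertwining relation it is the concrete spectral resolution of $B_\lambda$, with $\calU_\lambda f$ supplying the generalised Fourier coefficients and $\calJ_\lambda(\,\cdot\,,y)$ the generalised eigenfunctions of eigenvalue $-\tr(y)$. The one genuinely delicate point is the transfer of $B_\lambda$ onto the kernel: the generalised eigenfunction $\calJ_\lambda(x,\,\cdot\,)$ is smooth but not square-integrable, so self-adjointness of the $L^2$-extension cannot be invoked directly. I would therefore phrase that step as the formal symmetry of the differential expression tested against the compactly supported $f$, where the vanishing of boundary contributions is automatic, and only afterwards pass to the operator identity using unitarity of $\calU_\lambda$ together with density of $C_c^\infty(\Omega)$.
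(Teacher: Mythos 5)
Your reduction of the proposition to the single operator identity $\calU_\lambda B_\lambda\calU_\lambda^{-1}=M_{-\tr}$ is the right reading of the statement, and most of the pieces are sound: the displayed formula in (2) is indeed just the involutivity $\calU_\lambda^2=\id$, part (1) does follow from the intertwining by the spectral theory of multiplication operators, and the computation $\calU_\lambda(B_\lambda f)=-\tr(\cdot)\,\calU_\lambda f$ for $f\in C_c^\infty(\Omega)$ is legitimate, granted the formal symmetry of $B_\lambda$ with respect to $\td\mu_\lambda$ (true, though nowhere stated in this paper). The genuine gap is the phrase ``$C_c^\infty(\Omega)$, a core for the self-adjoint operator $B_\lambda$''. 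This is not proved, and it is false in general: the self-adjoint operator the proposition speaks about is the extension furnished by the unitary representation of \cite{ADO06}, and the minimal operator $B_\lambda|_{C_c^\infty(\Omega)}$ is in general \emph{not} essentially self-adjoint, so no identity verified only on $C_c^\infty(\Omega)$ can single out the paper's $B_\lambda$.

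The paper's own objects witness the failure. Take $\Omega=\RR_+$ with $B_\lambda=x\frac{\td^2}{\td x^2}+\lambda\frac{\td}{\td x}=x^{1-\lambda}\frac{\td}{\td x}\bigl(x^{\lambda}\frac{\td}{\td x}\bigr)$ on $L^2(\RR_+,x^{\lambda-1}\td x)$, and let $0<\lambda<2$. The $K$-Bessel function $\calK_\lambda(x)=2\widetilde{K}_{\lambda-1}(2\sqrt{x})$ satisfies $B_\lambda\calK_\lambda=\calK_\lambda$ pointwise and lies in $L^2(\RR_+,x^{\lambda-1}\td x)$ precisely in this parameter range, so the maximal operator (the adjoint of the minimal one) has the eigenvalue $+1$. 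If $C_c^\infty(\RR_+)$ were a core for the paper's $B_\lambda$, then the minimal operator would be essentially self-adjoint, its closure would coincide with the maximal operator, and the paper's $B_\lambda$ would have a positive eigenvalue --- contradicting part (1) itself. Equivalently, the endpoint $0$ is in the limit circle case for $0<\lambda<2$ (both solutions $1$ and $x^{1-\lambda}$, resp.\ $\log x$ for $\lambda=1$, of $B_\lambda u=0$ are square integrable near $0$ against $x^{\lambda-1}\td x$), so there is a one-parameter family of self-adjoint extensions, all agreeing with your computation on $C_c^\infty(\RR_+)$, and some of them have positive point spectrum. Your argument therefore shows only that $M_{-\tr}$ is \emph{one} self-adjoint extension of the conjugated minimal operator; it cannot show that it is the conjugate of the representation-theoretic extension, and a fortiori cannot prove (1). (For large $\lambda$, where both endpoints are limit point, your argument is complete; but the proposition covers all $\lambda>c(\Omega)$.) This is exactly the point the paper avoids by citing \cite[Section~6]{Moe12}: there $\calU_\lambda$ arises as the action of the longest Weyl group element in the unitary representation, so the intertwining is a conjugation identity between one-parameter unitary groups and automatically refers to the correct self-adjoint realization of $B_\lambda$. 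A repair within your framework would be to replace $C_c^\infty(\Omega)$ by a genuine core for that realization, e.g.\ $\calP(\Omega)e^{-\tr(x)}$, which is $B_\lambda$-invariant and consists of analytic vectors of the representation, so that Nelson's analytic vector theorem applies; but proving the analytic-vector property and justifying the integration by parts for these non-compactly supported functions again requires the representation theory, not just the differential equation.
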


\begin{proof}
Clearly (1) follows from (2). The statements of (2) can be found in \cite[Section 6]{Moe12}.
\end{proof}

\begin{remark}
The Laplace operator $\Delta$ of the Riemannian symmetric space $\Omega\cong G/K$ is related to the operator $B_\lambda$ in the following way
\begin{align*}
 \Delta &= (x|\calB_\lambda)-\left(\lambda-\frac{n}{r}\right)\calE,\\
 B_\lambda &= (e|\calB_\lambda),
\end{align*}
where $\calE=\left(x\middle|\frac{\partial}{\partial x}\right)$ is the Euler operator. Therefore, both $\Delta$ and $B_\lambda$ are polynomial second-order differential operators on $\Omega$ with the important difference that $B_\lambda$ is of Euler degree $-1$ whereas $\Delta$ is of Euler degree $0$.
\end{remark}

\subsection{The heat kernel}

We consider the following initial value problem for the heat equation on $(0,\infty)\times\Omega$ corresponding to the operator $B_\lambda$:
\begin{align}
 ((B_\lambda)_x-\partial_t)u(t,x) &= 0 && \mbox{on $(0,\infty)\times\Omega$,}\label{eq:HeatEquation}\\
 u(0,x) &= f(x) && \mbox{on $\Omega$}\label{eq:InitialValue}
\end{align}
for some function $f$ on $\Omega$. This initial value problem can be solved using the heat kernel. For this recall the $I$-Bessel function $\calI_\lambda(x,y)$ defined in Section \ref{sec:BesselFunctions}. We define the \textit{heat kernel} by
\begin{align*}
 h_\lambda(t,x,y) &= (2t)^{-r\lambda}e^{-\frac{1}{t}(\tr(x)+\tr(y))}\calI_\lambda\left(\frac{x}{t},\frac{y}{t}\right), & t>0,x,y\in\Omega.
\end{align*}
Note that $h_\lambda(t,x,y)>0$ for $t>0$ and $x,y\in\Omega$.

\begin{proposition}\label{prop:HeatKernelProperties1}
\begin{enumerate}
\item For each $\lambda>c(\Omega)$ there exists a constant $C>0$ such that
\begin{align*}
 h_\lambda(t,x,y) &\leq Ct^{-r\lambda}\left(1+\frac{|x|\cdot|y|}{t^2}\right)^{\frac{r(2n-1)}{4}}e^{-\frac{1}{t}(\tr(x)+\tr(y)-2r\sqrt{|x|\cdot|y|})}
\end{align*}
for all $t>0$, $x,y\in\Omega$.
\item The heat kernel satisfies the following invariance property for $t>0$, $x,y\in\Omega$ and $k\in K$:
\begin{align*}
 h_\lambda(t,kx,y) &= h_\lambda(t,x,k^{-1}y).
\end{align*}
\item The heat kernel is symmetric in $x$ and $y$:
\begin{align*}
 h_\lambda(t,x,y) &= h_\lambda(t,y,x).
\end{align*}
\item For $t>0$ and $x,y\in\Omega$ we have
\begin{align}
 h_\lambda(t,x,y) = 2^{-2r\lambda}\int_\Omega{e^{-t\cdot\tr(\xi)}\calI_\lambda(-x,\xi)\calI_\lambda(-y,\xi)\td\mu_\lambda(\xi)}.\label{eq:HeatKernelRepFormula}
\end{align}
\end{enumerate}
\end{proposition}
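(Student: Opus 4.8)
The four assertions split into routine verifications (parts 1–3) and one substantive identity (part 4). I would establish part 4 first, since the symmetry in part 3 falls out of it immediately. The plan for part 4 is to recognize the right-hand side of \eqref{eq:HeatKernelRepFormula} as a reproducing-type integral and evaluate it using Lemma \ref{lem:RepPropIBessel}~(2). The key move is a change of variables: in the integral $\int_\Omega e^{-t\tr(\xi)}\calI_\lambda(-x,\xi)\calI_\lambda(-y,\xi)\td\mu_\lambda(\xi)$, substitute $\xi=t\eta$ and use the $\chi_\lambda$-equivariance of $\td\mu_\lambda$ together with the scaling behavior of $\calI_\lambda$ under dilations (which follows from Proposition \ref{prop:BesselDiffEq}~(1), since dilations lie in $G$). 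After absorbing the Jacobian factor $t^{r\lambda}$ from the measure and rescaling the Bessel arguments, the integral should reduce to an instance of Lemma \ref{lem:RepPropIBessel}~(2) with $z$ and $x$ taken to be suitable multiples of $x/t$ and $y/t$. Matching the resulting exponential prefactor $e^{\tr(z)-\tr(x)}$ and the surviving $\calI_\lambda(-z,x)$ against the definition of $h_\lambda$ should produce exactly the claimed formula.

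For part 2, the invariance $h_\lambda(t,kx,y)=h_\lambda(t,x,k^{-1}y)$ is immediate from the definition of $h_\lambda$ together with the $K$-equivariance of $\calI_\lambda$: since $k\in K\subseteq G$ satisfies $k^*=k^{-1}$ (as $K=G\cap O(V)$), Proposition \ref{prop:BesselDiffEq}~(1) gives $\calI_\lambda(kx,y)=\calI_\lambda(x,k^{-1}y)$, and $\tr$ is $K$-invariant so the exponential prefactor is unchanged. Part 3 then follows either from the established representation formula \eqref{eq:HeatKernelRepFormula}, which is manifestly symmetric in $x$ and $y$, or directly from the conjugation-symmetry $\calI_\lambda(z,w)=\overline{\calI_\lambda(w,z)}$ of Proposition \ref{prop:BesselDiffEq}~(1) combined with the fact that $\calI_\lambda(x/t,y/t)$ is real for $x,y\in\Omega$.

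For part 1, the estimate is a direct consequence of the exponential-growth bound in Proposition \ref{prop:BesselDiffEq}~(2). Applying that bound to $\calI_\lambda(x/t,y/t)$ gives $|\calI_\lambda(x/t,y/t)|\leq C(1+\tfrac{|x|\,|y|}{t^2})^{r(2n-1)/4}e^{2r\sqrt{|x|\,|y|}/t}$, and multiplying by the prefactor $(2t)^{-r\lambda}e^{-\frac{1}{t}(\tr(x)+\tr(y))}$ and combining the two exponentials yields the stated inequality (absorbing the factor $2^{-r\lambda}$ into the constant $C$).

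The main obstacle will be part 4, specifically tracking the scaling constants through the change of variables so that the powers of $t$, the factor $2^{-2r\lambda}$ versus the $2^{-r\lambda}$ appearing in Lemma \ref{lem:RepPropIBessel}, and the arguments of the two $I$-Bessel factors line up precisely. The bookkeeping of how $\td\mu_\lambda$ transforms under $\xi\mapsto t\xi$ (contributing $t^{r\lambda}$, consistent with $c(\Omega)=\tfrac{n}{r}-1$ and the normalization $\td\mu_\lambda\propto\Delta^{\lambda-n/r}\td x$) and how this cancels against $(2t)^{-r\lambda}$ is where sign and constant errors are most likely to hide.
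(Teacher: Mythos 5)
Your plan matches the paper's proof step for step: part (1) from the growth estimate in Proposition \ref{prop:BesselDiffEq}~(2), part (2) from the $G_\CC$-invariance of $\calI_\lambda$ together with $k^*=k^{-1}$ for $k\in K$, part (3) from the symmetry of $\calI_\lambda$ (or from (4)), and part (4) by reducing the integral to Lemma \ref{lem:RepPropIBessel}~(2) through a dilation change of variables — the paper simply declares (4) ``immediate'' from that lemma, and your plan supplies exactly the computation it has in mind. The one slip is the direction of your substitution in part (4): it must be $\xi=\eta/t$ (Jacobian $t^{-r\lambda}$ by $\chi_\lambda$-equivariance), so that $e^{-t\tr(\xi)}$ becomes $e^{-\tr(\eta)}$ and $\calI_\lambda(-x,\eta/t)=\calI_\lambda(-x/t,\eta)$, whence the right-hand side of \eqref{eq:HeatKernelRepFormula} equals $2^{-2r\lambda}\,t^{-r\lambda}\,2^{r\lambda}\,e^{-\frac{1}{t}(\tr(x)+\tr(y))}\calI_\lambda\left(\frac{x}{t},\frac{y}{t}\right)=h_\lambda(t,x,y)$; with $\xi=t\eta$ as written, the exponent becomes $e^{-t^2\tr(\eta)}$ and Lemma \ref{lem:RepPropIBessel}~(2) does not apply directly.
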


\begin{proof}
\begin{enumerate}
\item This follows immediately from Proposition \ref{prop:BesselDiffEq}~(2).
\item Since for $k\in K$ we have $k^*=k^{-1}$ this is Proposition \ref{prop:BesselDiffEq}~(3).
\item Since the $I$-Bessel function is symmetric by Proposition \ref{prop:BesselDiffEq}~(1) this is clear.
\item This is immediate with Lemma \ref{lem:RepPropIBessel}.\qedhere
\end{enumerate}
\end{proof}

\begin{proposition}\label{prop:HeatKernelProperties2}
The heat kernel $h_\lambda(t,x,y)$ has the following properties for $s,t>0$ and $x,y\in\Omega$
\begin{enumerate}
\item (Normalization)
\begin{align*}
 \int_\Omega{h_\lambda(t,x,y)\td\mu_\lambda(y)} &= 1,
\end{align*}
\item (Semigroup)
\begin{align*}
 \int_\Omega{h_\lambda(s,x,z)h_\lambda(t,y,z)\td\mu_\lambda(z)} &= h_\lambda(s+t,x,y),
\end{align*}
\item (Differential equation) For every $y\in\Omega$ the function $h_\lambda(t,x,y)$ solves the heat equation \eqref{eq:HeatEquation} on $(0,\infty)\times\Omega$.
\end{enumerate}
\end{proposition}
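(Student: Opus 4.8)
The plan is to deduce all three statements from the two integral formulas of Lemma~\ref{lem:RepPropIBessel}, the representation formula \eqref{eq:HeatKernelRepFormula}, and the equivariance/differential properties of $\calI_\lambda$ in Proposition~\ref{prop:BesselDiffEq}. The organizing step is a single \emph{eigenfunction evolution} identity: for $s>0$, $x\in\Omega$ and $\eta\in\Omega$,
\begin{align*}
 \int_\Omega{h_\lambda(s,x,z)\calI_\lambda(-z,\eta)\td\mu_\lambda(z)} &= e^{-s\tr(\eta)}\calI_\lambda(-x,\eta).
\end{align*}
Conceptually this expresses that $h_\lambda(s,x,\cdot)$ scales the $B_\lambda$-eigenfunction $z\mapsto\calI_\lambda(-z,\eta)=\calJ_\lambda(z,\eta)$ (eigenvalue $-\tr(\eta)$ by Proposition~\ref{prop:BesselDiffEq}~(3)) by $e^{-s\tr(\eta)}$, exactly as the flow $e^{sB_\lambda}$ should; here I use $\calJ_\lambda(z,w)=\calI_\lambda(-z,w)$, which follows from the series definitions via the homogeneity of $\Phi_{\bf m}$ and is already used implicitly in the proof of Lemma~\ref{lem:RepPropIBessel}.

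To establish the evolution identity I would insert the definition of $h_\lambda$, pull out the factor $e^{-\frac{1}{s}\tr(x)}$, and substitute $z=s\zeta$. Since $\td\mu_\lambda$ is $\chi_\lambda$-equivariant and $\chi_\lambda(s\cdot\id)=s^{r\lambda}$, this substitution contributes a factor $s^{r\lambda}$ that cancels $(2s)^{-r\lambda}$ down to $2^{-r\lambda}$, and by the $G_\CC$-invariance of Proposition~\ref{prop:BesselDiffEq}~(1) one rewrites $\calI_\lambda(-s\zeta,\eta)=\calI_\lambda(-s\eta,\zeta)$ (using that $\calJ_\lambda$ is real on $\Omega\times\Omega$). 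The integral then has precisely the shape of Lemma~\ref{lem:RepPropIBessel}~(2) with $z=\frac{x}{s}$ and second parameter $s\eta$, yielding $e^{\frac{1}{s}\tr(x)-s\tr(\eta)}\calI_\lambda(-\frac{x}{s},s\eta)$; a final application of $G_\CC$-invariance simplifies $\calI_\lambda(-\frac{x}{s},s\eta)=\calI_\lambda(-x,\eta)$ and the prefactors collapse to the claimed answer. The normalization identity (1) is the same computation with $\calI_\lambda(-z,\eta)$ replaced by the constant $1$, i.e.\ it follows directly from the scaling substitution together with Lemma~\ref{lem:RepPropIBessel}~(1); equivalently it is the case $\eta\to0$ of the evolution identity, since $\calI_\lambda(-x,0)=1$.

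For the semigroup property (2) I would express the second factor through the representation formula \eqref{eq:HeatKernelRepFormula}, interchange the order of integration, and apply the evolution identity to the inner $z$-integral:
\begin{align*}
 \int_\Omega{h_\lambda(s,x,z)h_\lambda(t,y,z)\td\mu_\lambda(z)} &= 2^{-2r\lambda}\int_\Omega{e^{-t\tr(\eta)}\calI_\lambda(-y,\eta)\left(\int_\Omega{h_\lambda(s,x,z)\calI_\lambda(-z,\eta)\td\mu_\lambda(z)}\right)\td\mu_\lambda(\eta)}.
\end{align*}
The bracket equals $e^{-s\tr(\eta)}\calI_\lambda(-x,\eta)$, so the whole expression becomes $2^{-2r\lambda}\int_\Omega e^{-(s+t)\tr(\eta)}\calI_\lambda(-x,\eta)\calI_\lambda(-y,\eta)\td\mu_\lambda(\eta)$, which is exactly $h_\lambda(s+t,x,y)$ by \eqref{eq:HeatKernelRepFormula} again. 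The interchange of integrals is justified by absolute convergence: the growth bound $|\calI_\lambda(z,w)|\leq C(1+|z|\,|w|)^{r(2n-1)/4}e^{2r\sqrt{|z|\,|w|}}$ of Proposition~\ref{prop:BesselDiffEq}~(2), combined with the linear decay $e^{-t\tr(\eta)}$ (using $\tr(\eta)\geq|\eta|$ and $\sqrt{|\eta|}=o(|\eta|)$), dominates the integrand uniformly on the relevant ranges.

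For the differential equation (3) I would differentiate \eqref{eq:HeatKernelRepFormula} under the integral sign. Applying $\partial_t$ brings down a factor $-\tr(\eta)$, while $(B_\lambda)_x\calI_\lambda(-x,\eta)=-\tr(\eta)\calI_\lambda(-x,\eta)$ by Proposition~\ref{prop:BesselDiffEq}~(3) after pairing with $e$; the two integrands then coincide, giving $(B_\lambda)_x h_\lambda=\partial_t h_\lambda$. Differentiation under the integral is legitimate by the same dominated-convergence estimate, since applying $B_\lambda$ or $\partial_t$ inserts at most one polynomial factor $\tr(\eta)$, still swamped by $e^{-t_0\tr(\eta)}$ for $t\geq t_0>0$ and $x,y$ in a compact set. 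I expect the main obstacle to be the bookkeeping in the evolution identity of the previous paragraph, where the dilation scaling of $\td\mu_\lambda$ and the repeated use of $G_\CC$-invariance to move $\pm s$ between the two arguments of $\calI_\lambda$ must be tracked carefully so that the prefactors and the arguments of Lemma~\ref{lem:RepPropIBessel}~(2) match exactly; once that identity is in place, parts (1)--(3) are short.
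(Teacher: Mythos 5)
Your proposal is correct and follows essentially the same route as the paper: normalization via the scaling substitution and Lemma~\ref{lem:RepPropIBessel}~(1), the semigroup law via the representation formula \eqref{eq:HeatKernelRepFormula}, Fubini, and Lemma~\ref{lem:RepPropIBessel}~(2), and the heat equation by differentiating \eqref{eq:HeatKernelRepFormula} under the integral sign. Packaging the inner computation as an ``eigenfunction evolution identity'' and expanding the second factor rather than the first (equivalent by the symmetry of $h_\lambda$) is only a cosmetic reorganization of the calculation the paper performs inline.
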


\begin{proof}
\begin{enumerate}
\item We have
\begin{align*}
 \int_\Omega{h_\lambda(t,x,y)\td\mu_\lambda(y)} &= (2t)^{-r\lambda}e^{-\frac{1}{t}\tr(x)}\int_\Omega{\calI_\lambda\left(\frac{x}{t},\frac{y}{t}\right)e^{-\frac{1}{t}\tr(y)}\td\mu_\lambda(y)}\\
\intertext{and substituting $z=\frac{y}{t}$ we obtain}
 &= 2^{-r\lambda}e^{-\frac{1}{t}\tr(x)}\int_\Omega{\calI_\lambda\left(\frac{x}{t},z\right)e^{-\tr(z)}\td\mu_\lambda(z)} = 1
\end{align*}
by Lemma \ref{lem:RepPropIBessel}~(1).
\item We substitute \eqref{eq:HeatKernelRepFormula} for the first factor in the integrand. This yields
\begin{align*}
 & \int_\Omega{h_\lambda(s,x,z)h_\lambda(t,y,z)\td\mu_\lambda(z)}\\
 ={}& (8t)^{-r\lambda}\int_\Omega{\int_\Omega{e^{-s\tr(\xi)}\calI_\lambda(-x,\xi)\calI_\lambda(-z,\xi)e^{-\frac{1}{t}(\tr(y)+\tr(z))}\calI_\lambda\left(\frac{y}{t},\frac{z}{t}\right)\td\mu_\lambda(\xi)}\td\mu_\lambda(z)}\\
\intertext{and substituting $z=t\eta$ gives}
 ={}& 8^{-r\lambda}\int_\Omega{\left(\int_\Omega{e^{-\tr(\eta)}\calI_\lambda(-\eta,t\xi)\calI_\lambda\left(\frac{y}{t},\eta\right)\td\mu_\lambda(\eta)}\right)e^{-\frac{1}{t}\tr(y)}e^{-s\tr(\xi)}\calI_\lambda(-x,\xi)\td\mu_\lambda(\xi)}.\\
\intertext{Now Lemma \ref{lem:RepPropIBessel}~(2) gives}
 ={}& 2^{-2r\lambda}\int_\Omega{e^{-(s+t)\tr(\xi)}\calI_\lambda(-x,\xi)\calI_\lambda(-y,\xi)\td\mu_\lambda(\xi)}\\
 ={}& h_\lambda(s+t,x,y)
\end{align*}
by \eqref{eq:HeatKernelRepFormula} again.
\item This follows from \eqref{eq:HeatKernelRepFormula} by differentiating under the integral and using Proposition \ref{prop:BesselDiffEq}~(3).\qedhere
\end{enumerate}
\end{proof}

We also need some observations on the holomorphic extension of the heat kernel.

\begin{proposition}\label{prop:HeatKernelAnalyticExtension}
The heat kernel $h_\lambda(t,x,y)$ admits a unique extension to a function $h_\lambda(t,z,w)$ on $\RR_+\times V_\CC\times V_\CC$ which is holomorphic in $z$ and antiholomorphic in $w$. It further has the following properties for $t>0$ and $z,w\in V_\CC$:
\begin{enumerate}
\item For each $\lambda>c(\Omega)$ there exists a constant $C>0$ such that
\begin{align*}
 h_\lambda(t,z,w) &\leq Ct^{-r\lambda}\left(1+\frac{|z|\cdot|w|}{t^2}\right)^{\frac{r(2n-1)}{4}}e^{-\frac{1}{t}(\Re\tr(z)+\Re\tr(w)-2r\sqrt{|z|\cdot|w|})}
\end{align*}
for all $t>0$, $x,y\in\Omega$,
\item $\overline{h_\lambda(t,z,w)}=h_\lambda(t,w,z)$,
\item $h_\lambda(t,kz,w)=h_\lambda(t,z,k^{-1}w)$, $k\in K$.
\end{enumerate}
\end{proposition}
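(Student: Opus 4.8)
The plan is to write down the holomorphic--antiholomorphic extension by inspection of the defining formula and then to read off the three properties by direct computation from the corresponding properties of $\calI_\lambda$ recorded in Proposition~\ref{prop:BesselDiffEq}. For existence, I would extend each constituent of $h_\lambda(t,x,y)$ in the manner forced by the required (anti)holomorphy: the factor $\tr(x)$ by the $\CC$-linear trace $\tr(z)$, the factor $\tr(y)$ by the antiholomorphic function $\overline{\tr(w)}$, and $\calI_\lambda(x/t,y/t)$ by $\calI_\lambda(z/t,w/t)$, which is already holomorphic in its first and antiholomorphic in its second argument. This gives
$$h_\lambda(t,z,w) := (2t)^{-r\lambda}e^{-\frac{1}{t}(\tr(z)+\overline{\tr(w)})}\calI_\lambda\!\left(\tfrac{z}{t},\tfrac{w}{t}\right).$$
Since $\tr(y)\in\RR$ for $y\in\Omega$, this restricts to the original heat kernel on $\Omega\times\Omega$, and each factor carries the desired (anti)holomorphy, hence so does the product. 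For uniqueness I would appeal to the identity theorem: if the difference $f$ of two such extensions vanishes on $\Omega\times\Omega$, then $g(z,w):=f(z,\overline{w})$ is genuinely holomorphic on $V_\CC\times V_\CC$ and vanishes on the open subset $\Omega\times\Omega$ of the maximal totally real submanifold $V\times V\cong\RR^{2n}$, forcing $g\equiv0$ and hence $f\equiv0$.

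For the estimate (1), I would take absolute values in the defining formula. The exponential contributes $e^{-\frac{1}{t}\Re(\tr(z)+\overline{\tr(w)})}=e^{-\frac{1}{t}(\Re\tr(z)+\Re\tr(w))}$, using $\Re\overline{\tr(w)}=\Re\tr(w)$, while Proposition~\ref{prop:BesselDiffEq}~(2) applied at the points $z/t$ and $w/t$ bounds $|\calI_\lambda(z/t,w/t)|$ by $C(1+|z||w|/t^2)^{r(2n-1)/4}e^{\frac{2r}{t}\sqrt{|z||w|}}$. Combining the two exponentials and absorbing $2^{-r\lambda}$ into the constant reproduces the asserted bound.

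Properties (2) and (3) are then immediate. For (2) I would conjugate the formula: the exponential becomes $e^{-\frac{1}{t}(\tr(w)+\overline{\tr(z)})}$ and $\overline{\calI_\lambda(z/t,w/t)}=\calI_\lambda(w/t,z/t)$ by the conjugation symmetry of Proposition~\ref{prop:BesselDiffEq}~(1), and these reassemble into $h_\lambda(t,w,z)$. For (3), I would use that the trace is $K$-invariant, $\tr(kz)=\tr(z)$ for $k\in K$ (as $k$ is orthogonal and fixes $e$), so the exponential is unchanged, while $\calI_\lambda(kz/t,w/t)=\calI_\lambda(z/t,k^*w/t)=\calI_\lambda(z/t,k^{-1}w/t)$ by the $G_\CC$-invariance in Proposition~\ref{prop:BesselDiffEq}~(1) together with $k^*=k^{-1}$; since also $\tr(k^{-1}w)=\tr(w)$, this is exactly $h_\lambda(t,z,k^{-1}w)$. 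The computations are routine once the extension is in hand, so the only real point requiring care is the first one: seeing that antiholomorphy in $w$ forces the continuation of $\tr(y)$ to be $\overline{\tr(w)}$ rather than $\tr(w)$, and then making the totally real uniqueness argument precise; everything after that is bookkeeping with the invariance, symmetry, and growth properties of $\calI_\lambda$.
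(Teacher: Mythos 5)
Your proof is correct and takes essentially the same route as the paper: the paper's proof simply defines $h_\lambda(t,z,w)=(2t)^{-r\lambda}e^{-\frac{1}{t}(\tr(z)+\overline{\tr(w)})}\calI_\lambda\left(\frac{z}{t},\frac{w}{t}\right)$ and states that all properties follow from the results on $\calI_\lambda$ in Proposition~\ref{prop:BesselDiffEq}. You merely make explicit what the paper leaves implicit, in particular the identity-theorem argument for uniqueness via the totally real submanifold $V\times V\subseteq V_\CC\times V_\CC$, which is exactly the right justification.
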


\begin{proof}
Define
\begin{align*}
 h_\lambda(t,z,w) &= (2t)^{-r\lambda}e^{-\frac{1}{t}(\tr(z)+\tr(\overline{w}))}\calI_\lambda\left(\frac{z}{t},\frac{w}{t}\right), & t>0,z,w\in V_\CC,
\end{align*}
then everything follows from the results in Section \ref{sec:BesselFunctions}.
\end{proof}

\subsection{The heat semigroup}

For $t>0$ we define the \textit{heat kernel transform} $H_\lambda(t)$ on $f\in C_c^\infty(\Omega)$ by
\begin{align}
 H_\lambda(t)f(x) &:= \int_\Omega{h_\lambda(t,x,y)f(y)\td\mu_\lambda(y)}, & x\in\Omega.\label{eq:DefHeatKernelTrafo}
\end{align}

\begin{theorem}\label{thm:HeatKernelTransform}
For $t>0$ and $1\leq p\leq\infty$ the integral in \eqref{eq:DefHeatKernelTrafo} converges absolutely for every $f\in L^p(\Omega,\td\mu_\lambda)$ and defines a continuous linear operator on $L^p(\Omega,\td\mu_\lambda)$ of operator norm $\leq1$ with the following properties:
\begin{enumerate}
\item $H_\lambda(t)$ is smoothing, i.e. $H_\lambda(t)f\in C^\infty(\Omega)$ for $f\in L^p(\Omega,\td\mu_\lambda)$,
\item $H_\lambda(s)\circ H_\lambda(t)=H_\lambda(s+t)$, $s,t>0$,
\item $H_\lambda(t)$ is $K$-equivariant, i.e. $H_\lambda(t)\circ\ell(k)=\ell(k)\circ H_\lambda(t)$ with $\ell(k)$ as in \eqref{eq:DefLeftRegularRep},
\item $H_\lambda(t)$ is symmetric on $L^2(\Omega,\td\mu_\lambda)$.
\end{enumerate}
\end{theorem}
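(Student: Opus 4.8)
The plan is to treat $H_\lambda(t)$ as an integral operator with the positive kernel $h_\lambda(t,x,y)$ and to extract all four properties from Propositions \ref{prop:HeatKernelProperties1} and \ref{prop:HeatKernelProperties2}. The structural facts I will use are: positivity $h_\lambda(t,x,y)>0$; symmetry $h_\lambda(t,x,y)=h_\lambda(t,y,x)$ (Proposition \ref{prop:HeatKernelProperties1}(3)); the $K$-invariance $h_\lambda(t,kx,y)=h_\lambda(t,x,k^{-1}y)$ (Proposition \ref{prop:HeatKernelProperties1}(2)); the bound of Proposition \ref{prop:HeatKernelProperties1}(1); the normalization $\int_\Omega h_\lambda(t,x,y)\td\mu_\lambda(y)=1$ and the kernel semigroup law (Proposition \ref{prop:HeatKernelProperties2}(1),(2)). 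Combining the normalization with symmetry gives $\int_\Omega h_\lambda(t,x,y)\td\mu_\lambda(x)=1$ as well, so both marginals of the kernel equal $1$.

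For absolute convergence and boundedness I would fix $x\in\Omega$ and note that the estimate of Proposition \ref{prop:HeatKernelProperties1}(1) shows $y\mapsto h_\lambda(t,x,y)$ to be bounded and to decay exponentially as $|y|\to\infty$, since $\tr(y)\ge|y|$ dominates the term $2r\sqrt{|x|\,|y|}=O(\sqrt{|y|})$; hence it lies in $L^{p'}(\Omega,\td\mu_\lambda)$ for every conjugate exponent $p'$. Hölder's inequality then gives absolute convergence of \eqref{eq:DefHeatKernelTrafo} for every $f\in L^p(\Omega,\td\mu_\lambda)$ and every $x$. For the operator norm, when $1\le p<\infty$ the measure $h_\lambda(t,x,y)\td\mu_\lambda(y)$ is a probability measure in $y$, so Jensen's inequality yields $|H_\lambda(t)f(x)|^p\le\int_\Omega h_\lambda(t,x,y)|f(y)|^p\td\mu_\lambda(y)$; integrating in $x$, applying Tonelli, and using the $x$-marginal $\int_\Omega h_\lambda(t,x,y)\td\mu_\lambda(x)=1$ gives $\|H_\lambda(t)f\|_p\le\|f\|_p$. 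The case $p=\infty$ is immediate from the normalization. (Equivalently this is Schur's test with both row and column bounds equal to $1$.)

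Properties (2)--(4) then follow by manipulating the kernel. For the semigroup law I would substitute the definition into $H_\lambda(s)H_\lambda(t)f(x)$, interchange the order of integration (Tonelli is legitimate as the integrand is nonnegative, and the inner $z$-integral equals $h_\lambda(s+t,x,y)$ by Proposition \ref{prop:HeatKernelProperties2}(2) once symmetry is used to write $h_\lambda(t,z,y)=h_\lambda(t,y,z)$), and recognise $H_\lambda(s+t)f$. For $K$-equivariance I would substitute $y\mapsto ky$ in $H_\lambda(t)(\ell(k)f)(x)$; since $K$ is connected and contained in $O(V)$ the measure $\td\mu_\lambda$ is $K$-invariant, and Proposition \ref{prop:HeatKernelProperties1}(2), rewritten as $h_\lambda(t,x,ky)=h_\lambda(t,k^{-1}x,y)$, turns the result into $(H_\lambda(t)f)(k^{-1}x)=\ell(k)(H_\lambda(t)f)(x)$. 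Symmetry on $L^2(\Omega,\td\mu_\lambda)$ amounts to $\langle H_\lambda(t)f,g\rangle=\langle f,H_\lambda(t)g\rangle$; writing both sides as double integrals, using that $h_\lambda(t,x,y)$ is real and symmetric, and invoking Fubini (justified by the $L^2$ bound and Cauchy--Schwarz) makes the two expressions coincide.

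The point I expect to require the most care is the smoothing property (1). Here I would use the holomorphic extension $h_\lambda(t,z,w)$ of Proposition \ref{prop:HeatKernelAnalyticExtension}: since $h_\lambda(t,\cdot,y)$ extends holomorphically on a fixed neighbourhood of any compact subset of $\Omega$ and the bound of Proposition \ref{prop:HeatKernelAnalyticExtension}(1) is locally uniform in $z$, Cauchy's integral formula bounds every $x$-derivative of $h_\lambda(t,x,y)$ by a function of $y$ that still decays exponentially, and hence lies in $L^{p'}(\Omega,\td\mu_\lambda)$, locally uniformly in $x$. This domination justifies differentiation under the integral sign to all orders, so $H_\lambda(t)f\in C^\infty(\Omega)$ (in fact real-analytic). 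The only genuine technical work throughout is producing these locally uniform, $L^{p'}$-integrable dominating functions for the kernel and its derivatives; once they are in hand, every interchange of limit, derivative and integral is routine.
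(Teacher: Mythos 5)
Your proposal is correct and follows essentially the same route as the paper: the paper also fixes $x$, uses the bound of Proposition \ref{prop:HeatKernelProperties1}~(1) to place $h_\lambda(t,x,\cdot)$ in every $L^q(\Omega,\td\mu_\lambda)$, proves the norm bound by the H\"older-splitting form of your Jensen/Schur argument together with the normalization of Proposition \ref{prop:HeatKernelProperties2}~(1) (plus kernel symmetry for the $x$-marginal), and reads off properties (2)--(4) from the kernel identities exactly as you do. The only divergence is that your smoothing argument via the holomorphic extension and Cauchy estimates is actually more careful than the paper's, which simply asserts smoothness from the smoothness of the kernel and the continuity of its $L^q$-norm in $x$.
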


\begin{proof}
First note that for fixed $t>0$ and $x\in\Omega$ the function $h_\lambda(t,x,y)$ is by Proposition \ref{prop:HeatKernelProperties1}~(1) contained in $L^q(\Omega,\td\mu_\lambda)$ for any $1\leq q\leq\infty$ with $L^q$-norm depending continuously on $x$. Hence the integral in \eqref{eq:DefHeatKernelTrafo} converges absolutely for every $f\in L^p(\Omega,\td\mu_\lambda)$ and defines a smooth function $H_\lambda(t)f$ on $\Omega$. Assume that $1<p<\infty$. To show that $H_\lambda(t)f\in L^p(\Omega,\td\mu_\lambda)$ first observe that by H\"older's inequality with $1<q<\infty$ the dual exponent to $p$, i.e. $\frac{1}{p}+\frac{1}{q}=1$, we find
\begin{align*}
 & \int_\Omega{|h_\lambda(t,x,y)f(y)|\td\mu_\lambda(y)}\\
 ={}& \int_\Omega{h_\lambda(t,x,y)^{\frac{1}{q}}\cdot h_\lambda(t,x,y)^{\frac{1}{p}}|f(y)|\td\mu_\lambda(y)}\\
 \leq{}& \left(\int_\Omega{h_\lambda(t,x,y)\td\mu_\lambda(y)}\right)^{\frac{1}{q}}\left(\int_\Omega{h_\lambda(t,x,y)|f(y)|^p\td\mu_\lambda(y)}\right)^{\frac{1}{p}}\\
 ={}& \left(\int_\Omega{h_\lambda(t,x,y)|f(y)|^p\td\mu_\lambda(y)}\right)^{\frac{1}{p}}.
\end{align*}
where we have used Proposition \ref{prop:HeatKernelProperties2}\,(1). Then we find, using Fubini's theorem
\begin{align*}
 \|H_\lambda(t)f\|_{L^p(\Omega,\td\mu_\lambda)}^p &= \int_\Omega{\left|\int_\Omega{h_\lambda(t,x,y)f(y)\td\mu_\lambda(y)}\right|^p\td\mu_\lambda(x)}\\
 &\leq \int_\Omega{\int_\Omega{h_\lambda(t,x,y)|f(y)|^p\td\mu_\lambda(y)}\td\mu_\lambda(x)}\\
 &= \int_\Omega{|f(y)|^p\td\mu_\lambda(y)} = \|f\|_{L^p(\Omega,\td\mu_\lambda)}^p,
\end{align*}
where we have again used Proposition \ref{prop:HeatKernelProperties2}\,(1). This shows that $H_\lambda(t)$ extends to a continuous linear operator on $L^p(\Omega,\td\mu_\lambda)$ of norm $\leq1$ for $1<p<\infty$. For $p=1$ and $p=\infty$ the proof is similar. We now prove the other properties:
\begin{enumerate}
\item This was already established above.
\item This is clear in view of Proposition \ref{prop:HeatKernelProperties2}~(2).
\item This follows from Proposition \ref{prop:HeatKernelProperties1}~(2) and the $K$-invariance of the measure $\td\mu_\lambda$.
\item This is immediate with Proposition \ref{prop:HeatKernelProperties1}~(3).\qedhere
\end{enumerate}
\end{proof}

\begin{proposition}
Let $f\in L^2(\Omega,\td\mu_\lambda)$ be of the form $f(x)=p(x)e^{-\tr(x)}$ for some polynomial $p\in\calP(\Omega)$. Define a function $u(x,t)$ on $\Omega\times[0,\infty)$ by
\begin{align*}
 u(x,t) &:= \begin{cases}H_\lambda(t)f(x) & \mbox{for $t>0$,}\\f(x) & \mbox{for $t=0$.}\end{cases}
\end{align*}
Then $u\in C^\infty(\Omega\times(0,\infty))\cap C(\Omega\times[0,\infty))$ and it solves the heat equation \eqref{eq:HeatEquation} with initial value \eqref{eq:InitialValue}.
\end{proposition}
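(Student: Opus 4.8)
The plan is to separate the two regimes $t>0$ and $t\to0^+$, the latter being the genuinely delicate point. For $t>0$ both the smoothness of $u$ and the validity of \eqref{eq:HeatEquation} follow by differentiating under the integral sign in \eqref{eq:DefHeatKernelTrafo}. Since $f(x)=p(x)e^{-\tr(x)}$ decays rapidly on $\Omega$ while, by Proposition \ref{prop:HeatKernelProperties1}~(1) together with the holomorphic extension of Proposition \ref{prop:HeatKernelAnalyticExtension}, the kernel $h_\lambda(t,x,y)$ and all its $(t,x)$-derivatives obey bounds that are locally uniform in $(t,x)$ and integrable against $\td\mu_\lambda(y)$, one may interchange differentiation and integration freely on $\Omega\times(0,\infty)$. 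This yields $u\in C^\infty(\Omega\times(0,\infty))$, and applying $(B_\lambda)_x-\partial_t$ under the integral while invoking Proposition \ref{prop:HeatKernelProperties2}~(3) (each $h_\lambda(t,\cdot,y)$ solves the heat equation) gives $((B_\lambda)_x-\partial_t)u=0$ for $t>0$.

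The heart of the matter is continuity up to $t=0$, i.e. $\lim_{t\to0^+}H_\lambda(t)f(x)=f(x)$. The naive approximate-identity argument based on the normalization in Proposition \ref{prop:HeatKernelProperties2}~(1) is awkward here, because the crude estimate of Proposition \ref{prop:HeatKernelProperties1}~(1) does not display the concentration of $h_\lambda(t,x,\cdot)$ near $y=x$ as $t\to0^+$ (its exponent $\tr(x)+\tr(y)-2r\sqrt{|x|\,|y|}$ need not be nonnegative). I would therefore pass to the spectral side. Substituting \eqref{eq:HeatKernelRepFormula} into \eqref{eq:DefHeatKernelTrafo}, using $\calI_\lambda(-y,\xi)=\calJ_\lambda(y,\xi)$ (immediate from the series, since $\Phi_{\bf m}$ is homogeneous of degree $|{\bf m}|$ in its first argument), Fubini's theorem, and the definition \eqref{eq:DefUnitaryInversion} of $\calU_\lambda$ (with $\calJ_\lambda$ real and symmetric in real arguments), one rewrites
\begin{align*}
 H_\lambda(t)f(x) &= 2^{-r\lambda}\int_\Omega{e^{-t\tr(\xi)}\calJ_\lambda(x,\xi)g(\xi)\td\mu_\lambda(\xi)}, & g:=\calU_\lambda f.
\end{align*}

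The key observation is that $g=\calU_\lambda f$ is again of the form $q\,e^{-\tr}$ for a polynomial $q\in\calP(\Omega)$: the Segal--Bargmann transform $\BB_\lambda$ of Theorem \ref{thm:FockSpace} maps $\calP(\Omega)e^{-\tr}$ onto $\calP(V_\CC)$ (e.g.\ $e^{-\tr}$ is sent to the constant $1$, a rescaled instance of Lemma \ref{lem:RepPropIBessel}~(1), the general statement being part of the Fock-space structure of \cite{Moe12}), and the intertwining relation \eqref{eq:IntRelSBTrafoUnitInv} exhibits $\calU_\lambda=\BB_\lambda^{-1}\circ(-1)^*\circ\BB_\lambda$ as preserving this space, since $F\mapsto F(-z)$ preserves $\calP(V_\CC)$. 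Consequently $g$ decays rapidly, whereas $|\calJ_\lambda(x,\xi)|$ grows only like $e^{2r\sqrt{|x|\,|\xi|}}$ by Proposition \ref{prop:BesselDiffEq}~(2); hence $\xi\mapsto\calJ_\lambda(x,\xi)g(\xi)$ lies in $L^1(\Omega,\td\mu_\lambda)$, with a bound locally uniform in $x$ and independent of $t\in[0,1]$ (using $0<e^{-t\tr(\xi)}\leq1$). Dominated convergence then makes the displayed integral jointly continuous in $(x,t)$ on $\Omega\times[0,\infty)$, and as $t\to0^+$ it gives
\begin{align*}
 \lim_{t\to0^+}H_\lambda(t)f(x) &= 2^{-r\lambda}\int_\Omega{\calJ_\lambda(x,\xi)g(\xi)\td\mu_\lambda(\xi)} = f(x)
\end{align*}
by the spectral inversion of Proposition \ref{prop:SpectralDecompBessel}~(2). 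This establishes $u\in C(\Omega\times[0,\infty))$ and the initial condition \eqref{eq:InitialValue}. The main obstacle is precisely this passage to $t=0$, and the reduction to $\calU_\lambda$ together with the rapid decay of $g$ is what makes it go through where the direct kernel estimate does not.
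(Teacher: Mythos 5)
Your proof is correct and follows essentially the same route as the paper: both pass to the spectral side, rewriting $H_\lambda(t)f(x)=2^{-r\lambda}\int_\Omega e^{-t\tr(\xi)}\calJ_\lambda(x,\xi)\,\calU_\lambda f(\xi)\td\mu_\lambda(\xi)$, use that $\calU_\lambda f$ is again of the form $q\,e^{-\tr}$ (the paper cites \cite[Proposition 6.2 \&\ Theorem 6.3]{Moe12} for this, while you derive it from $\calU_\lambda=\BB_\lambda^{-1}\circ(-1)^*\circ\BB_\lambda$ and the fact that $\BB_\lambda$ carries $\calP(\Omega)e^{-\tr}$ onto $\calP(V_\CC)$ --- the same fact from \cite{Moe12} in different clothing), then obtain continuity up to $t=0$ by dominated convergence and identify the limit via the involutivity of $\calU_\lambda$, and the equation itself by differentiating under the integral.
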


\begin{proof}
Recall from Proposition \ref{prop:SpectralDecompBessel} the unitary involutive operator $\calU_\lambda$ on $L^2(\Omega,\td\mu_\lambda)$. From \cite[Proposition 6.2 \&\ Theorem 6.3]{Moe12} it follows that $\calU_\lambda f$ is again of the form $q(x)e^{-\tr(x)}$ for a polynomial $q\in\calP(\Omega)$. Using Proposition \ref{prop:HeatKernelProperties1}~(4) we find
\begin{align*}
 u(x,t) &= \int_\Omega{h_\lambda(t,x,y)f(y)\td\mu_\lambda(y)}\\
 &= 2^{-2r\lambda}\int_\Omega{\int_\Omega{e^{-t\tr(z)}\calJ_\lambda(x,z)\calJ_\lambda(y,z)f(y)\td\mu_\lambda(z)}\td\mu_\lambda(y)}\\
 &= 2^{-r\lambda}\int_\Omega{e^{-t\tr(z)}\calJ_\lambda(x,z)\calU_\lambda f(z)\td\mu_\lambda(z)}.
\end{align*}
Since $\calU_\lambda f(x)=q(x)e^{-\tr(x)}$ it follows from the estimate in Proposition \ref{prop:HeatKernelProperties1}~(1) that this expression defines a function in $C^\infty(\Omega\times(0,\infty))\cap C(\Omega\times[0,\infty))$. For $t=0$ it gives $\calU_\lambda^2f(x)=f(x)$ since $\calU_\lambda$ is involutive. Differentiating under the integral with Proposition \ref{prop:BesselDiffEq}~(3) finally shows the differential equation.
\end{proof}

\begin{theorem}\label{thm:HeatKernelSemigroup}
For $t>0$ we have $e^{tB_\lambda}=H_\lambda(t)$.
\end{theorem}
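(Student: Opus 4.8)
Since $B_\lambda$ is self-adjoint on $L^2(\Omega,\td\mu_\lambda)$ with $\spec(B_\lambda)=(-\infty,0]$, the operator $e^{tB_\lambda}$ is well-defined by the functional calculus for each $t>0$, and it is bounded with operator norm $\leq1$ because $e^{ts}\leq1$ for $s\leq0$; this already matches the norm bound for $H_\lambda(t)$ from Theorem \ref{thm:HeatKernelTransform}. The strategy is to diagonalize $B_\lambda$ by means of the unitary inversion operator $\calU_\lambda$, compute $e^{tB_\lambda}$ explicitly in this model, and then recognize the resulting expression as the integral formula for $H_\lambda(t)$ already derived in the preceding Proposition.

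\textbf{Key steps.} By Proposition \ref{prop:SpectralDecompBessel} the operator $\calU_\lambda$ is unitary and involutive and intertwines $B_\lambda$ with the multiplication operator $-\tr(x)$, i.e. $\calU_\lambda B_\lambda\calU_\lambda=-\tr(x)$ as self-adjoint operators. Applying the functional calculus to the bounded continuous function $s\mapsto e^{ts}$ on $(-\infty,0]$, this intertwining passes to
\begin{align*}
 e^{tB_\lambda} &= \calU_\lambda\circ e^{-t\tr(x)}\circ\calU_\lambda,
\end{align*}
where $e^{-t\tr(x)}$ denotes the (bounded) multiplication operator. Writing out the definition \eqref{eq:DefUnitaryInversion} of $\calU_\lambda$, I would obtain for $f\in L^2(\Omega,\td\mu_\lambda)$
\begin{align*}
 e^{tB_\lambda}f(x) &= 2^{-r\lambda}\int_\Omega{\calJ_\lambda(x,z)\,e^{-t\tr(z)}\,\calU_\lambda f(z)\,\td\mu_\lambda(z)}.
\end{align*}

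\textbf{Conclusion and obstacle.} For $f$ of the special form $f(x)=p(x)e^{-\tr(x)}$ with $p\in\calP(\Omega)$, the right-hand side is \emph{literally} the expression for $u(x,t)=H_\lambda(t)f(x)$ computed in the Proposition immediately preceding this theorem (using Proposition \ref{prop:HeatKernelProperties1}~(4) and that $\calU_\lambda f$ is again of the form $q\cdot e^{-\tr}$). Hence $e^{tB_\lambda}f=H_\lambda(t)f$ on the subspace $\calP(\Omega)e^{-\tr(x)}$, which is dense in $L^2(\Omega,\td\mu_\lambda)$. As both $e^{tB_\lambda}$ and $H_\lambda(t)$ are bounded operators on $L^2(\Omega,\td\mu_\lambda)$, they agree everywhere by continuity and density, proving the claim. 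The one point requiring care—the main obstacle—is the justification of the functional-calculus step: one must verify that the intertwining of the unbounded self-adjoint operator $B_\lambda$ with multiplication by $-\tr(x)$ (including the agreement of domains, so that $\calU_\lambda$ realizes the full spectral diagonalization on $L^2(\Omega,\td\mu_\lambda)$) indeed transports to the corresponding identity for the bounded function $e^{ts}$. Once this is in place, the remaining steps are purely formal manipulations and a standard density argument.
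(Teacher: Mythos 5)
Your proof is correct and follows essentially the same route as the paper: both diagonalize $B_\lambda$ via the unitary involution $\calU_\lambda$ from Proposition \ref{prop:SpectralDecompBessel}, pass to $e^{tB_\lambda}=\calU_\lambda\circ e^{-t\tr(\cdot)}\circ\calU_\lambda$ by the spectral calculus, identify the resulting kernel with $h_\lambda(t,x,z)$ via Proposition \ref{prop:HeatKernelProperties1}~(4), and conclude by density of $\calP(\Omega)e^{-\tr(x)}$. The ``obstacle'' you flag (that the intertwining holds as an identity of self-adjoint operators, so the functional calculus applies) is exactly what the paper's citation of \cite[Section 6]{Moe12} in Proposition \ref{prop:SpectralDecompBessel} supplies, so your argument is complete on the same terms as the paper's.
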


\begin{proof}
By Proposition \ref{prop:SpectralDecompBessel} the spectral decomposition of $B_\lambda$ is for $f\in\calP(\Omega)e^{-\tr(x)}$ given by the convergent integral
\begin{align*}
 B_\lambda f(x) &= 2^{-2r\lambda}\int_\Omega{\int_\Omega{(B_\lambda)_x\calJ_\lambda(x,y)\calJ_\lambda(y,z)f(z)\td\mu_\lambda(z)}\td\mu_\lambda(y)}\\
 &= -2^{-2r\lambda}\int_\Omega{\int_\Omega{\tr(y)\calJ_\lambda(x,y)\calJ_\lambda(y,z)f(z)\td\mu_\lambda(z)}\td\mu_\lambda(y)}.
\end{align*}
Hence $e^{tB_\lambda}$ is the integral operator
\begin{align*}
 e^{tB_\lambda}f(x) &= 2^{-2r\lambda}\int_\Omega{\int_\Omega{e^{-t\tr(y)}\calJ_\lambda(x,y)\calJ_\lambda(y,z)f(z)\td\mu_\lambda(z)}\td\mu_\lambda(y)}\\
 &= \int_\Omega{h_\lambda(t,x,z)f(z)\td\mu_\lambda(z)} = H_\lambda(t)f(x),
\end{align*}
where we have used Proposition \ref{prop:HeatKernelProperties1}~(4). Since $\calP(\Omega)e^{-\tr(x)}$ is dense in $L^2(\Omega,\td\mu_\lambda)$ the claim follows.
\end{proof}

\subsection{The image of the heat kernel transform}

To characterize the image of the heat kernel transform $H_\lambda(t)$ we first prove that every function in the image extends to a holomorphic function on $\Xi$. Recall that $\calO(\overline{\Xi})$ denotes the space of holomorphic functions on $\Xi$ which extend to $\overline{\Xi}=V_\CC$ and endow it with the topology of compact convergence.

\begin{proposition}\label{prop:HeatKernelTrafoAnalyticExtension}
Let $t>0$. Then for every $f\in L^2(\Omega,\td\mu_\lambda)$ the integral
\begin{align*}
 \widetilde{H}_\lambda(t)f(z) &:= \int_\Omega{h_\lambda(t,z,x)f(x)\td\mu_\lambda(x)}, & z\in V_\CC,
\end{align*}
converges uniformly on bounded subsets of $V_\CC$ and defines a function $\widetilde{H}_\lambda(t)f\in\calO(\overline{\Xi})$. The map $\widetilde{H}_\lambda(t):L^2(\Omega,\td\mu_\lambda)\to\calO(\overline{\Xi})$ defines a continuous linear operator.
\end{proposition}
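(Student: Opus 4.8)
The plan is to deduce all three assertions from a single uniform $L^2$-bound on the holomorphically extended kernel. Fix $t>0$ and $R>0$, and write $B_R=\{z\in V_\CC:|z|\leq R\}$. Using the holomorphic extension $h_\lambda(t,z,x)$ and its estimate in Proposition~\ref{prop:HeatKernelAnalyticExtension}~(1), together with $|\Re\tr(z)|\leq|\tr(z)|\leq\sqrt r\,|z|\leq\sqrt r\,R$ for $z\in B_R$, I would dominate the integrand uniformly in $z\in B_R$ by a single nonnegative function
\[
 |h_\lambda(t,z,x)|\leq C_{R,t}\Bigl(1+\tfrac{R|x|}{t^2}\Bigr)^{\frac{r(2n-1)}{4}}e^{-\frac1t(\tr(x)-2r\sqrt{R|x|})}=:g_{R,t}(x),\qquad x\in\Omega .
\]

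The technical heart --- and the step I expect to be the main obstacle --- is to show $g_{R,t}\in L^2(\Omega,\td\mu_\lambda)$, i.e. that the decay $e^{-\tr(x)/t}$ coming from the heat kernel dominates the subexponential Bessel growth $e^{2r\sqrt{|z|\,|x|}/t}$. Here I would use the equivalence $|x|\leq\tr(x)\leq\sqrt r\,|x|$ to obtain $\tr(x)-2r\sqrt{R|x|}\geq|x|-2r\sqrt R\,\sqrt{|x|}\geq\tfrac12|x|$ for $|x|$ large, so that $g_{R,t}(x)^2$ decays like $e^{-|x|/t}$ up to a polynomial factor. Since $\td\mu_\lambda=\const\cdot\Delta(x)^{\lambda-\frac nr}\td x$ has only polynomial growth on $\overline{\Omega}$, this exponential decay beats both the polynomial prefactor and the growth of the measure, giving $\|g_{R,t}\|_{L^2}<\infty$.

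Granting this, the three assertions follow in a routine manner. For $f\in L^2(\Omega,\td\mu_\lambda)$ Cauchy--Schwarz gives $g_{R,t}\,|f|\in L^1(\Omega,\td\mu_\lambda)$, so dominated convergence bounds the tails $\int_{|x|>T}|h_\lambda(t,z,x)f(x)|\,\td\mu_\lambda(x)\leq\int_{|x|>T}g_{R,t}|f|\,\td\mu_\lambda\to0$ uniformly for $z\in B_R$; this is precisely the claimed absolute and uniform convergence on bounded subsets. Continuity of $\widetilde H_\lambda(t)$ is the same estimate read off differently: since the topology of $\calO(\overline{\Xi})$ is that of uniform convergence on bounded (equivalently compact) subsets of $V_\CC$, generated by the seminorms $F\mapsto\sup_{z\in B_R}|F(z)|$, Cauchy--Schwarz yields $\sup_{z\in B_R}|\widetilde H_\lambda(t)f(z)|\leq\|g_{R,t}\|_{L^2}\,\|f\|_{L^2(\Omega,\td\mu_\lambda)}$, which is exactly continuity.

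It remains to check that $\widetilde H_\lambda(t)f$ is holomorphic on $V_\CC$. Continuity of $z\mapsto\widetilde H_\lambda(t)f(z)$ again follows from the dominating function $g_{R,t}|f|$ via dominated convergence. For separate holomorphy in each coordinate $z_\alpha$ I would invoke Morera's theorem: for a closed triangle $\gamma$ in a complex line the dominating bound justifies interchanging $\oint_\gamma$ and $\int_\Omega$ by Fubini, and $\oint_\gamma h_\lambda(t,z,x)\,\td z_\alpha=0$ because $z\mapsto h_\lambda(t,z,x)$ is entire by Proposition~\ref{prop:HeatKernelAnalyticExtension}. A continuous, separately holomorphic function on $V_\CC\cong\CC^n$ is holomorphic (Osgood's lemma), so $\widetilde H_\lambda(t)f$ is entire; its restriction lies in $\calO(\overline{\Xi})$, and evaluating at $z=x\in\Omega$ recovers $H_\lambda(t)f$, so $\widetilde H_\lambda(t)$ genuinely extends the heat kernel transform.
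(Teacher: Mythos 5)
Your proposal is correct and takes essentially the same route as the paper: both arguments rest entirely on the estimate of Proposition~\ref{prop:HeatKernelAnalyticExtension}~(1), which shows that $h_\lambda(t,z,\cdot)\in L^2(\Omega,\td\mu_\lambda)$ with norm controlled locally uniformly in $z$, followed by Cauchy--Schwarz and the holomorphy of the kernel in $z$. The paper compresses this into two sentences, while you make explicit the two details it leaves to the reader --- the verification that the dominating function $g_{R,t}$ is square-integrable against $\td\mu_\lambda$, and the Morera/Osgood argument passing holomorphy through the integral.
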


\begin{proof}
By Proposition \ref{prop:HeatKernelAnalyticExtension}~(1) the heat kernel $h_\lambda(t,z,x)$ is for fixed $t>0$ and $z\in V_\CC$ contained in $L^2(\Omega,\td\mu_\lambda)$ with $L^2$-norm depending continuously on $z$. Since $h_\lambda(t,z,x)$ is analytic in $z$ the claim follows.
\end{proof}

We now determine the image of the heat kernel transform. Let
\begin{align*}
 \calF_{\lambda,t}(\Xi) &:= \left\{F\in\calO(\overline{\Xi}):\int_\Xi{|F(z)|^2e^{\frac{1}{t}\tr(x)}\omega_\lambda\left(\frac{z}{t}\right)\td\nu_\lambda(z)}<\infty\right\},
\end{align*}
where $z=x+iy$ with $x,y\in V$, and endow it with the inner product
\begin{align}
 \langle F,G\rangle_{\lambda,t} &:= \frac{(2t)^{-r\lambda}}{\Gamma_\Omega(\frac{n}{r})}\int_\Xi{F(z)\overline{G(z)}e^{\frac{1}{t}\tr(x)}\omega_\lambda\left(\frac{z}{t}\right)\td\nu_\lambda(z)}.\label{eq:BergmannSpaceInnerProduct}
\end{align}

\begin{theorem}\label{thm:HeatKernelTrafoImage}
\begin{enumerate}
\item The space $\calF_{\lambda,t}(\Xi)$ is a Hilbert space with reproducing kernel
\begin{align*}
 \KK_{\lambda,t}(z,w) &= h_\lambda(2t,z,w) = \frac{1}{(4t)^{r\lambda}}e^{-\frac{1}{2t}(\tr(z)+\tr(\overline{w}))}\calI_\lambda\left(\frac{z}{2t},\frac{w}{2t}\right).
\end{align*}
\item The space
\begin{align*}
 \{p(z)e^{-\frac{1}{2t}\tr(z)}:\mbox{$p$ a holomorphic polynomial on $V_\CC$}\}
\end{align*}
is contained in $\calF_{\lambda,t}(\Xi)$ and forms a dense subspace.
\item The heat kernel transform is a unitary isomorphism
\begin{align*}
 \widetilde{H}_\lambda(t):L^2(\Omega,\td\mu_\lambda)\to\calF_{\lambda,t}(\Xi).
\end{align*}
\end{enumerate}
\end{theorem}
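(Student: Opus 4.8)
The plan is to reduce everything to the Segal--Bargmann transform $\BB_\lambda$ of Theorem \ref{thm:FockSpace} by exhibiting $\widetilde{H}_\lambda(t)$ as a composition of $\BB_\lambda$ with a dilation and a multiplication operator. Writing out the definitions of $h_\lambda$ and $\BB_\lambda$ and substituting $x=t\xi$ in the defining integral for $\widetilde{H}_\lambda(t)f$, using that $\td\mu_\lambda$ is $\chi_\lambda$-equivariant so that $\td\mu_\lambda(t\xi)=t^{r\lambda}\td\mu_\lambda(\xi)$, I expect to obtain the factorization
\begin{align*}
 \widetilde{H}_\lambda(t)f(z) &= 2^{-r\lambda}e^{-\frac{1}{2t}\tr(z)}\,\BB_\lambda(D_tf)\!\left(\tfrac{z}{t}\right),
\end{align*}
where $(D_tf)(\xi):=f(t\xi)$. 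Thus $\widetilde{H}_\lambda(t)=\Psi\circ\BB_\lambda\circ D_t$ with $(\Psi F)(z):=2^{-r\lambda}e^{-\frac{1}{2t}\tr(z)}F(z/t)$. Each factor is a bijection of the ambient function spaces: $D_t$ acts on functions on $\Omega$, $\BB_\lambda$ maps onto $\calF_\lambda(\Xi)$, and $\Psi$ is invertible on $\calO(\overline{\Xi})$ since $z\mapsto z/t$ is linear and $e^{-\frac{1}{2t}\tr(z)}$ is entire, so $\Psi$ preserves holomorphic extendability to $V_\CC$.

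The heart of the proof is the bookkeeping of norms. A change of variables gives $\|D_tf\|_{L^2}^2=t^{-r\lambda}\|f\|_{L^2}^2$. For $\Psi$ I would write $z=x+iy$ with $x,y\in V$, so that $\Re\tr(z)=\tr(x)$ and hence $|\Psi F(z)|^2e^{\frac{1}{t}\tr(x)}=2^{-2r\lambda}|F(z/t)|^2$; here the Gaussian factor in the weight of $\calF_{\lambda,t}(\Xi)$ exactly cancels the modulus of the holomorphic multiplier. Substituting $z=tw$ in the remaining integral and using that the dilation $z\mapsto tz$ on $\Xi$ corresponds, under $\Xi=U\Omega$, to dilation of the $\Omega$-parameter by $t^2$ (whence $\td\nu_\lambda(tw)=t^{2r\lambda}\td\nu_\lambda(w)$), together with the definition of $\omega_\lambda$ and the normalizing constants, I expect $\langle\Psi F,\Psi G\rangle_{\lambda,t}=t^{r\lambda}\langle F,G\rangle_\lambda$. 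Combining the three scalings the factors $t^{\pm r\lambda}$ cancel, so $\widetilde{H}_\lambda(t)$ is an isometry; bijectivity follows from that of the three factors together with the unitarity of $\BB_\lambda$ (Theorem \ref{thm:FockSpace}(c)). This proves (3), and shows $\calF_{\lambda,t}(\Xi)$ is complete, hence a Hilbert space.

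For (2) I would transfer density along $\Psi$: by Theorem \ref{thm:FockSpace}(b) the holomorphic polynomials are dense in $\calF_\lambda(\Xi)$, and $\Psi$ carries a polynomial $p$ to $2^{-r\lambda}q(z)e^{-\frac{1}{2t}\tr(z)}$ with $q(z)=p(z/t)$ again a polynomial, the assignment $p\mapsto q$ being a linear bijection of $\calP(V_\CC)$; since $\Psi$ is a topological isomorphism this yields the claimed dense subspace. For the reproducing kernel in (1) there are two routes. One can transfer the kernel $\KK_\lambda(z,w)=\calI_\lambda(z/2,w/2)$ of $\calF_\lambda(\Xi)$ through $\Psi$ via $\langle F,H\rangle_\lambda=t^{-r\lambda}\langle\Psi F,\Psi H\rangle_{\lambda,t}$, obtaining $\KK_{\lambda,t}(z,w)=\overline{m(w)}\,t^{-r\lambda}m(z)\calI_\lambda(z/(2t),w/(2t))$ with $m(z)=2^{-r\lambda}e^{-\frac{1}{2t}\tr(z)}$, which simplifies to $h_\lambda(2t,z,w)$. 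Alternatively, since $\widetilde{H}_\lambda(t)$ is unitary and point evaluations are continuous (Cauchy--Schwarz, as $h_\lambda(t,w,\cdot)\in L^2(\Omega,\td\mu_\lambda)$ by Proposition \ref{prop:HeatKernelTrafoAnalyticExtension}), the kernel at $w$ is $\widetilde{H}_\lambda(t)$ applied to $x\mapsto h_\lambda(t,x,w)=\overline{h_\lambda(t,w,x)}$, namely $\int_\Omega h_\lambda(t,z,x)h_\lambda(t,x,w)\,\td\mu_\lambda(x)$, which equals $h_\lambda(2t,z,w)$ by the semigroup property (Proposition \ref{prop:HeatKernelProperties2}(2)) analytically continued in $z$ and $w$.

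The main obstacle I anticipate is precisely the norm identity for $\Psi$: the Gaussian weight $e^{\frac{1}{t}\tr(x)}$, the dilated $K$-Bessel density $\omega_\lambda(z/t)$, and the normalizing constant $(2t)^{-r\lambda}/\Gamma_\Omega(\frac{n}{r})$ in the definition of $\calF_{\lambda,t}(\Xi)$ all have to conspire to make $\Psi$ a scaled isometry. Establishing this requires keeping careful track of the two different dilation exponents — $t^{r\lambda}$ for $\td\mu_\lambda$ on $\Omega$ versus $t^{2r\lambda}$ for $\td\nu_\lambda$ on $\Xi$ — and of how the modulus of the holomorphic Gaussian multiplier is absorbed by the real weight.
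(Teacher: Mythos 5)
Your proposal is correct and is essentially the paper's own proof: the paper factors $\widetilde{H}_\lambda(t)=\Phi_t\circ\BB_\lambda\circ\tau_t$ with $\Phi_tF(z)=(4t)^{-\frac{1}{2}r\lambda}e^{-\frac{1}{2t}\tr(z)}F(z/t)$ and $\tau_tf(x)=t^{\frac{r\lambda}{2}}f(tx)$, which coincides with your $\Psi\circ\BB_\lambda\circ D_t$ after redistributing the scalar normalizations (the paper makes each factor exactly unitary, you make them scaled isometries whose factors $t^{\pm r\lambda}$ cancel), and then deduces (1), (2), (3) from Theorem \ref{thm:FockSpace}(a), (b), (c) exactly as you do. Your norm bookkeeping (including $\td\nu_\lambda(tw)=t^{2r\lambda}\td\nu_\lambda(w)$ and the cancellation of the Gaussian weight) and the alternative semigroup derivation of the kernel are correct verifications of steps the paper leaves implicit.
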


\begin{proof}
Recall the Hilbert space $\calF_\lambda(\Xi)$ and the Segal--Bargmann transform $\BB_\lambda$ from Section \ref{sec:SegalBargmannAndUnitaryInversion}. We define a unitary isomorphism $\Phi_t:\calF_\lambda(\Xi)\to\calF_{\lambda,t}(\Xi)$ by
\begin{align*}
 \Phi_tF(z) &:= (4t)^{-\frac{1}{2}r\lambda}e^{-\frac{1}{2t}\tr(z)}F(\tfrac{z}{t}).
\end{align*}
Then statements (1) and (2) follow immediately from Theorem \ref{thm:FockSpace}~(a) and (b). To see that statement (3) follows from Theorem \ref{thm:FockSpace}~(c) we note that
\begin{align*}
 \tau_tf(x) &:= t^{\frac{r\lambda}{2}}f(tx)
\end{align*}
defines a unitary isomorphism on $L^2(\Omega,\td\mu_\lambda)$ and then it is easy to see that
\begin{align*}
 \widetilde{H}_\lambda(t) &= \Phi_t\circ\BB_\lambda\circ\tau_t.
\end{align*}
This shows (3) and the proof is complete.
\end{proof}

\subsection{The restriction principle}\label{sec:RestrictionPrinciple}

The heat kernel transform $\widetilde{H}_\lambda(t)$ can also be obtained via the restriction principle. This gives an alternative proof of its unitarity.

\begin{lemma}
The restriction operator
\begin{align*}
 \calR_{\lambda,t}:\calD(\calR_{\lambda,t})\to L^2(\Omega,\td\mu_\lambda),\,F\mapsto F|_\Omega,
\end{align*}
with domain
\begin{align*}
 \calD(\calR_{\lambda,t}) &= \{F\in\calF_{\lambda,t}(\Xi):F|_\Omega\in L^2(\Omega,\td\mu_\lambda)\}
\end{align*}
is a closed unbounded operator. It is densely defined, injective and has dense range.
\end{lemma}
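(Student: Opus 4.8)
The plan is to treat $\calR_{\lambda,t}$ as a densely defined operator in the usual operator-theoretic sense and to read off the four listed properties directly from the Hilbert space structure of $\calF_{\lambda,t}(\Xi)$ recorded in Theorem \ref{thm:HeatKernelTrafoImage}. The two facts I would lean on are that $\calF_{\lambda,t}(\Xi)$ is a reproducing kernel Hilbert space (part (1)), so that norm convergence forces pointwise convergence, and that the functions $p(z)e^{-\frac{1}{2t}\tr(z)}$ with $p$ a holomorphic polynomial on $V_\CC$ form a dense subspace (part (2)). The organizing idea is to use this single explicit family to settle both the density of the domain and the density of the range; in particular none of the steps invokes the unitarity of $\widetilde H_\lambda(t)$, which would make the restriction-principle argument circular.

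For closedness, suppose $F_n\to F$ in $\calF_{\lambda,t}(\Xi)$ and $\calR_{\lambda,t}F_n=F_n|_\Omega\to g$ in $L^2(\Omega,\td\mu_\lambda)$. Evaluation at a point of $\Xi$ is continuous via the reproducing kernel $\KK_{\lambda,t}$, so $F_n\to F$ pointwise on $\Xi$, in particular on $\Omega$; passing to a subsequence along which $F_n|_\Omega\to g$ almost everywhere then identifies $F|_\Omega=g$ as elements of $L^2(\Omega,\td\mu_\lambda)$. Hence $F\in\calD(\calR_{\lambda,t})$ with $\calR_{\lambda,t}F=g$, so $\calR_{\lambda,t}$ is closed. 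For the domain being dense, I would note that each $p(z)e^{-\frac{1}{2t}\tr(z)}$ restricts on $\Omega$ to $p(x)e^{-\frac{1}{2t}\tr(x)}$, which lies in $L^2(\Omega,\td\mu_\lambda)$ because the decay of $e^{-\frac{1}{t}\tr(x)}$ dominates the polynomial $|p|^2$ against the locally finite measure $\td\mu_\lambda$ (here $\lambda>c(\Omega)$ secures integrability up to $\partial\Omega$). Thus this family lies in $\calD(\calR_{\lambda,t})$ and is already dense in $\calF_{\lambda,t}(\Xi)$ by Theorem \ref{thm:HeatKernelTrafoImage}~(2).

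Injectivity is the one genuinely function-theoretic point: if $\calR_{\lambda,t}F=0$, then, since $F\in\calO(\overline{\Xi})$ extends holomorphically to all of $V_\CC$ and $\Omega$ is an open subset of the real form $V\subseteq V_\CC=V\oplus iV$, the entire function $F$ vanishes on a totally real submanifold of maximal dimension and hence vanishes identically, so $F=0$. For the range I would observe that $\calR_{\lambda,t}$ maps the family above onto $\calP(\Omega)e^{-\frac{1}{2t}\tr(x)}$, and that this space is dense in $L^2(\Omega,\td\mu_\lambda)$. To obtain the latter I would transport the density of $\calP(\Omega)e^{-\tr(x)}$ recorded in Section \ref{sec:EquivariantMeasures} by the dilation $\tau_s f(x)=s^{\frac{r\lambda}{2}}f(sx)$, which is unitary on $L^2(\Omega,\td\mu_\lambda)$ and carries $\calP(\Omega)e^{-\tr(x)}$ onto $\calP(\Omega)e^{-s\tr(x)}$; taking $s=\frac{1}{2t}$ gives the claim, so the range is dense.

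The individual steps are short, and the point that needs care is organizational rather than technical: one wants a single explicit subspace that simultaneously sits inside the a priori opaque domain $\calD(\calR_{\lambda,t})$, is dense on the $\calF_{\lambda,t}(\Xi)$ side, and has dense image on the $L^2(\Omega,\td\mu_\lambda)$ side. The family $\{p(z)e^{-\frac{1}{2t}\tr(z)}\}$ does all three, provided one checks that the restrictions really are square-integrable, which is exactly where the hypothesis $\lambda>c(\Omega)$ enters. Finally, I would read the phrase \emph{unbounded operator} in the standard sense of an operator given on a domain rather than as an assertion that $\calR_{\lambda,t}$ fails to be norm-bounded, and I would not attempt to prove non-boundedness; the four properties established above are precisely what is needed for the polar decomposition $\calR_{\lambda,t}^*=\calU_{\lambda,t}\circ|\calR_{\lambda,t}|$ to produce a genuine unitary.
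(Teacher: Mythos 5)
Your proposal is correct and follows essentially the same route as the paper: closedness via continuity of point evaluations in the reproducing kernel Hilbert space $\calF_{\lambda,t}(\Xi)$, density of domain and range via the single family $\{p(z)e^{-\frac{1}{2t}\tr(z)}\}$ together with Theorem \ref{thm:HeatKernelTrafoImage}~(2), and injectivity from $\Omega\subseteq\Xi$ being totally real. The only difference is that you fill in two details the paper leaves implicit --- the almost-everywhere subsequence identification of the $L^2$-limit, and the dilation argument $\tau_s$ transporting the density of $\calP(\Omega)e^{-\tr(x)}$ to $\calP(\Omega)e^{-\frac{1}{2t}\tr(x)}$ --- both of which are sound.
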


\begin{proof}
To show that $\calR_{\lambda,t}$ is closed let $(F_n)_n\subseteq\calD(\calR_{\lambda,t})$ be a sequence with $F_n\to F$ in $\calF_{\lambda,t}(\Xi)$ as $n\to\infty$ and $\calR_{\lambda,t}F_n\to f$ in $L^2(\Omega,\td\mu_\lambda)$. Since point evaluations in $\calF_{\lambda,t}(\Xi)$ are continuous it follows that in particular $F_n(x)\to F(x)$ for every $x\in\Omega$. Hence $f=F|_\Omega\in L^2(\Omega,\td\mu_\lambda)$.\\
The domain $\calD(\calR_{\lambda,t})$ certainly contains the space
\begin{align*}
 \{p(z)e^{-\frac{1}{2t}\tr(z)}:\mbox{$p$ a holomorphic polynomial on $V_\CC$}\}
\end{align*}
which is by Theorem \ref{thm:HeatKernelTrafoImage}~(2) dense in $\calF_{\lambda,t}(\Xi)$. Its image under $\calR_{\lambda,t}$ is equal to $\calP(\Omega)e^{-\frac{1}{2t}\tr(x)}$ which is dense in $L^2(\Omega,\td\mu_\lambda)$. Hence $\calR_{\lambda,t}$ is densely defined and has dense range. Finally injectivity of $\calR_{\lambda,t}$ is clear since $\calF_{\lambda,t}(\Xi)$ consists of holomorphic functions and $\Omega\subseteq\Xi$ is totally real.
\end{proof}

We consider the adjoint $\calR_{\lambda,t}^*:\calD(\calR_{\lambda,t}^*)\to\calF_{\lambda,t}(\Xi)$.

\begin{proposition}
The operator $\calR_{\lambda,t}^*$ is given by
\begin{align*}
 \calR_{\lambda,t}^*f(z) &= \int_\Omega{h_\lambda(2t,z,x)f(x)\td\mu_\lambda(x)} = \widetilde{H}_\lambda(2t)f(z).
\end{align*}
In particular, $\calR_{\lambda,t}\calR_{\lambda,t}^*$ is bounded on $L^2(\Omega,\td\mu_\lambda)$ and agrees with the heat kernel transform $H_\lambda(2t)$.
\end{proposition}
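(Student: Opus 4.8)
The plan is to use that, by Theorem \ref{thm:HeatKernelTrafoImage}~(1), the target space $\calF_{\lambda,t}(\Xi)$ is a reproducing kernel Hilbert space with kernel $\KK_{\lambda,t}(z,w)=h_\lambda(2t,z,w)$, and to compute the adjoint by testing against the kernel functions. First I would check that for each fixed $w\in V_\CC$ the kernel function $\KK_{\lambda,t}(\cdot,w)=h_\lambda(2t,\cdot,w)$ lies in the domain $\calD(\calR_{\lambda,t})$: it belongs to $\calF_{\lambda,t}(\Xi)$ by the reproducing property, while its restriction to $\Omega$ lies in $L^2(\Omega,\td\mu_\lambda)$ because the decay estimate of Proposition \ref{prop:HeatKernelAnalyticExtension}~(1) (equivalently Proposition \ref{prop:HeatKernelProperties1}~(1), using that $\tr(x)$ dominates the polynomial factor) gives Gaussian decay in $x$. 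This is exactly what is needed to feed these functions into the defining relation of the adjoint.

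Next, for $f\in\calD(\calR_{\lambda,t}^*)$ I would put $g:=\calR_{\lambda,t}^*f\in\calF_{\lambda,t}(\Xi)$ and evaluate it pointwise. Applying the adjoint relation $\langle\calR_{\lambda,t}F,f\rangle_{L^2(\Omega,\td\mu_\lambda)}=\langle F,\calR_{\lambda,t}^*f\rangle_{\lambda,t}$ to $F=\KK_{\lambda,t}(\cdot,w)$, together with the reproducing identity $\langle\KK_{\lambda,t}(\cdot,w),g\rangle_{\lambda,t}=\overline{g(w)}$, yields
\begin{align*}
 \overline{g(w)} &= \int_\Omega{h_\lambda(2t,x,w)\overline{f(x)}\td\mu_\lambda(x)}.
\end{align*}
Taking complex conjugates and invoking the Hermitian symmetry $\overline{h_\lambda(2t,x,w)}=h_\lambda(2t,w,x)$ of Proposition \ref{prop:HeatKernelAnalyticExtension}~(2) gives
\begin{align*}
 g(w) &= \int_\Omega{h_\lambda(2t,w,x)f(x)\td\mu_\lambda(x)},
\end{align*}
which is precisely $\widetilde{H}_\lambda(2t)f(w)$ by Proposition \ref{prop:HeatKernelTrafoAnalyticExtension}. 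This establishes the integral formula for $\calR_{\lambda,t}^*$.

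For the final assertion I would restrict the identity $\calR_{\lambda,t}^*f=\widetilde{H}_\lambda(2t)f$ back to $\Omega$: the holomorphic extension $\widetilde{H}_\lambda(2t)f$ restricts to the ordinary heat kernel transform $H_\lambda(2t)f$, which always lies in $L^2(\Omega,\td\mu_\lambda)$. Hence $\calR_{\lambda,t}^*f\in\calD(\calR_{\lambda,t})$ for every $f\in\calD(\calR_{\lambda,t}^*)$, so $\calR_{\lambda,t}\calR_{\lambda,t}^*$ is defined on all of $\calD(\calR_{\lambda,t}^*)$ and agrees there with $H_\lambda(2t)$. Now $H_\lambda(2t)$ is bounded and symmetric on $L^2(\Omega,\td\mu_\lambda)$ by Theorem \ref{thm:HeatKernelTransform}, hence self-adjoint, while $\calR_{\lambda,t}\calR_{\lambda,t}^*$ is self-adjoint as the product of a closed densely defined operator with its adjoint. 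A self-adjoint operator admits no proper symmetric extension, so the inclusion $\calR_{\lambda,t}\calR_{\lambda,t}^*\subseteq H_\lambda(2t)$ forces equality; in particular $\calD(\calR_{\lambda,t}^*)=L^2(\Omega,\td\mu_\lambda)$ and $\calR_{\lambda,t}\calR_{\lambda,t}^*=H_\lambda(2t)$ is bounded.

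The main obstacle I anticipate is not a single hard estimate but the careful bookkeeping with the unbounded adjoint: one must verify membership of the kernel functions in $\calD(\calR_{\lambda,t})$ before the adjoint relation may be applied pointwise, and in the last step one must upgrade the a priori only densely defined composition $\calR_{\lambda,t}\calR_{\lambda,t}^*$ to the genuinely bounded operator $H_\lambda(2t)$. Both points rest on the decay estimate of Proposition \ref{prop:HeatKernelAnalyticExtension}~(1) and on the maximality of self-adjoint operators, together with the density of $\calP(\Omega)e^{-\tr(x)}$ in $L^2(\Omega,\td\mu_\lambda)$.
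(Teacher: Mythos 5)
Your proposal is correct and follows essentially the same route as the paper: evaluate $\calR_{\lambda,t}^*f$ pointwise by pairing with the kernel functions $\KK_{\lambda,t}(\cdot,w)=h_\lambda(2t,\cdot,w)$, apply the defining relation of the adjoint, and use the Hermitian symmetry $\overline{h_\lambda(2t,x,w)}=h_\lambda(2t,w,x)$ from Proposition \ref{prop:HeatKernelAnalyticExtension}. You are in fact more thorough than the paper's own (four-line) proof, which neither verifies that the kernel functions lie in $\calD(\calR_{\lambda,t})$ nor justifies the final assertion that $\calR_{\lambda,t}\calR_{\lambda,t}^*$ is bounded and equals $H_\lambda(2t)$; your argument for the latter --- $\calR_{\lambda,t}\calR_{\lambda,t}^*$ is self-adjoint by von Neumann's theorem, is contained in the bounded self-adjoint operator $H_\lambda(2t)$, and maximality of self-adjoint operators forces equality --- is a correct and welcome addition.
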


Note that $\widetilde{H}_\lambda(2t):L^2(\Omega,\td\mu_\lambda)\to\calF_{\lambda,2t}(\Xi)$ is an isomorphism and we have
\begin{align*}
 \calD(\calR_{\lambda,t}^*) &= \{f\in L^2(\Omega,\td\mu_\lambda):\widetilde{H}_\lambda(2t)f\in\calF_{\lambda,t}(\Xi)\}.
\end{align*}

\begin{proof}
We have
\begin{align*}
 \calR_{\lambda,t}^*f(x) &= \langle\calR_{\lambda,t}^*f|\KK_{\lambda,t}(-,z)\rangle = \langle f|\calR_{\lambda,t}\KK_{\lambda,t}(-,z)\rangle\\
 &= \int_\Omega{f(x)\overline{\KK_{\lambda,t}(x,z)}\td\mu_\lambda(x)} = \int_\Omega{h_\lambda(2t,z,x)f(x)\td\mu_\lambda(x)}.\qedhere
\end{align*}
\end{proof}

Now consider the polar decomposition of the operator $\calR_{\lambda,t}^*$:
\begin{align*}
 \calR_{\lambda,t}^* &= \calU_{\lambda,t}\circ|\calR_{\lambda,t}|
\end{align*}
with a unitary operator $\calU_{\lambda,t}:L^2(\Omega,\td\mu_\lambda)\to\calF_{\lambda,t}(\Xi)$ and $|\calR_{\lambda,t}|=\sqrt{\calR_{\lambda,t}\calR_{\lambda,t}^*}$. Since $\calR_{\lambda,t}\calR_{\lambda,t}^*=H_\lambda(2t)$ and $(H_\lambda(t))_{t>0}$ forms a semigroup the square root is given by $|\calR_{\lambda,t}|=H_\lambda(t)$.

\begin{theorem}
$$ \calU_{\lambda,t} = \widetilde{H}_\lambda(t). $$
\end{theorem}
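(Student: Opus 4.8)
The plan is to pin down both unitaries by testing them on the dense image of the positive operator $H_\lambda(t)$. Since $|\calR_{\lambda,t}| = \sqrt{\calR_{\lambda,t}\calR_{\lambda,t}^*} = \sqrt{H_\lambda(2t)} = H_\lambda(t)$ by the semigroup property, the polar decomposition reads $\calR_{\lambda,t}^* = \calU_{\lambda,t}\circ H_\lambda(t)$, and the preceding proposition identifies $\calR_{\lambda,t}^* = \widetilde H_\lambda(2t)$. Thus
\begin{align*}
 \widetilde H_\lambda(2t) &= \calU_{\lambda,t}\circ H_\lambda(t).
\end{align*}
It therefore suffices to prove the parallel factorization $\widetilde H_\lambda(2t) = \widetilde H_\lambda(t)\circ H_\lambda(t)$ and then to cancel the common factor $H_\lambda(t)$.

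For the factorization I would argue by holomorphic continuation from $\Omega$. Fix $f\in L^2(\Omega,\td\mu_\lambda)$ and put $g := H_\lambda(t)f$. By Proposition \ref{prop:HeatKernelTrafoAnalyticExtension} the function $\widetilde H_\lambda(t)g\in\calO(\overline\Xi)$ restricts on $\Omega$ to $H_\lambda(t)g = H_\lambda(t)\circ H_\lambda(t)f = H_\lambda(2t)f$, where the last step is the semigroup property (Theorem \ref{thm:HeatKernelTransform}~(2)); likewise $\widetilde H_\lambda(2t)f$ restricts on $\Omega$ to $H_\lambda(2t)f$. Hence $\widetilde H_\lambda(t)g$ and $\widetilde H_\lambda(2t)f$ are holomorphic functions on $\Xi$ whose restrictions to the totally real submanifold $\Omega\subseteq\Xi$ coincide, so they are equal on all of $\Xi$. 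This gives $\widetilde H_\lambda(t)\circ H_\lambda(t) = \widetilde H_\lambda(2t)$ as maps $L^2(\Omega,\td\mu_\lambda)\to\calO(\overline\Xi)$.

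Comparing the two factorizations yields $\calU_{\lambda,t}\circ H_\lambda(t) = \widetilde H_\lambda(t)\circ H_\lambda(t)$ on the (dense) domain where the polar identity applies; since both $\calU_{\lambda,t}$ and $\widetilde H_\lambda(t)$ are bounded into $\calF_{\lambda,t}(\Xi)$, this equality of bounded operators extends to all of $L^2(\Omega,\td\mu_\lambda)$. To finish I would cancel $H_\lambda(t)$ using that its range is dense. It is injective: if $H_\lambda(t)f = 0$, then $\widetilde H_\lambda(t)f$ is holomorphic on $\Xi$ and vanishes on the totally real $\Omega$, hence vanishes identically, so $f = 0$ by unitarity of $\widetilde H_\lambda(t)$ (Theorem \ref{thm:HeatKernelTrafoImage}~(3)). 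Being in addition bounded and symmetric, hence self-adjoint (Theorem \ref{thm:HeatKernelTransform}~(4)), it satisfies $\overline{\im H_\lambda(t)} = (\ker H_\lambda(t))^\perp = L^2(\Omega,\td\mu_\lambda)$. Two continuous maps into $\calF_{\lambda,t}(\Xi)$ agreeing on the dense subspace $\im H_\lambda(t)$ must coincide, so $\calU_{\lambda,t} = \widetilde H_\lambda(t)$.

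The point requiring care is the bookkeeping around the a priori unbounded operator $\calR_{\lambda,t}^*$: the polar identity $\calR_{\lambda,t}^* = \calU_{\lambda,t}\circ H_\lambda(t)$ is stated on $\calD(\calR_{\lambda,t}^*)$, so I would evaluate the two factorizations only on that (dense) domain before passing to the closure. This is exactly why the argument is organized around the image of $H_\lambda(t)$ rather than around $\calR_{\lambda,t}^*$ itself, and the two uses of the totally real embedding $\Omega\subseteq\Xi$ (for the factorization and for the injectivity of $H_\lambda(t)$) are the geometric heart of the proof.
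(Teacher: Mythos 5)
Your proof is correct and takes essentially the same route as the paper: the polar identity $\calR_{\lambda,t}^*=\calU_{\lambda,t}\circ H_\lambda(t)$, the identification $\calR_{\lambda,t}^*=\widetilde{H}_\lambda(2t)$, the factorization $\widetilde{H}_\lambda(2t)=\widetilde{H}_\lambda(t)\circ H_\lambda(t)$, and cancellation of $H_\lambda(t)$. The only difference is that you justify the two steps the paper leaves implicit --- the factorization (via uniqueness of holomorphic extension from the totally real $\Omega\subseteq\Xi$) and the cancellation (via injectivity and self-adjointness of $H_\lambda(t)$, hence dense range) --- which is added rigor rather than a different argument.
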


\begin{proof}
We have
\begin{align*}
 \left(\calU_{\lambda,t}\circ|\calR_{\lambda,t}|\right)f(z) &= \calR_{\lambda,t}^*f(z) = \widetilde{H}_\lambda(2t)f(z)\\
 &= \left(\widetilde{H}_\lambda(t)\circ H_\lambda(t)\right)f(z) = \left(\widetilde{H}_\lambda(t)\circ|\calR_{\lambda,t}|\right)f(z)
\end{align*}
and hence $\calU_{\lambda,t}=\widetilde{H}_\lambda(t)$.
\end{proof}
\section{Example: The positive real line}\label{sec:Example}

We illustrate our results at the example $\Omega=\RR_+$. Here $V=\RR$ is the one-dimensional Jordan algebra with complexification $V_\CC=\CC$ and the complexification $\Xi$ of $\Omega$ is given by $\Xi=\CC^\times=\CC\setminus\{0\}$. The Bessel operator $B_\lambda$ is given by
\begin{align*}
 B_\lambda &= x\frac{\td^2}{\td x^2}+\lambda\frac{\td}{\td x},
\end{align*}
and $L^2(\Omega,\td\mu_\lambda)=L^2(\RR_+,x^{\lambda-1}\td x)$, $\lambda>c(\Omega)=0$. The heat kernel takes the form
\begin{align*}
 h_\lambda(t,z,w) &= (2t)^{-\lambda}e^{-\frac{z+\overline{w}}{t}}\widetilde{I}_{\lambda-1}\left(2\frac{\sqrt{z\overline{w}}}{t}\right), & t>0,z,w\in\CC.
\end{align*}
Hence the heat kernel transform is given by
\begin{align*}
 \widetilde{H}_\lambda(t)f(z) &= (2t)^{-\lambda}e^{-\frac{z}{t}}\int_0^\infty{\widetilde{I}_{\lambda-1}\left(2\frac{\sqrt{zx}}{t}\right)e^{-\frac{x}{t}}f(x)x^{\lambda-1}\td x}.
\end{align*}
The density $\omega_\lambda(z)$ has the form
\begin{align*}
 \omega_\lambda(z) &= 2\widetilde{K}_{\lambda-1}(|z|), & z\in\CC^\times,
\end{align*}
and hence the space $\calF_{\lambda,t}(\Xi)=\calF_{\lambda,t}(\CC^\times)$ is given by all entire functions $F$ on $\CC$ such that
\begin{align*}
 \|F\|^2 &= 4(2t)^{-\lambda}\int_\CC{|F(z)|^2e^{\frac{x}{t}}\widetilde{K}_{\lambda-1}\left(\frac{|z|}{t}\right)|z|^{2\lambda-1}\td z} < \infty.
\end{align*}

Consider the squaring map $\pi:\RR\setminus\{0\}\to\RR_+,\,y\mapsto y^2$ which is a two-fold cover of $\RR_+$ and induces a unitary (up to a scalar) isomorphism
\begin{align*}
 \pi^*:L^2(\RR_+,x^{\lambda-1}\td x)\to L^2_{\even}(\RR,y^{2\lambda-1}\td y),\,\pi^*F(y)=F(y^2).
\end{align*}
The differential operator on $L^2_{\even}(\RR,y^{2\lambda-1}\td y)$ corresponding to the Bessel operator is given by
\begin{align*}
 \pi^*B_\lambda(\pi^*)^{-1} &= \frac{1}{4}\left(\frac{\td^2}{\td y^2}+\frac{2\lambda-1}{y}\frac{\td}{\td y}\right).
\end{align*}
For $\lambda=\frac{1}{2}$ this is the usual Laplacian on $\RR$ and since
\begin{align*}
 \widetilde{I}_{-\frac{1}{2}}(z) &= \frac{1}{\sqrt{\pi}}\cosh z, & \widetilde{K}_{-\frac{1}{2}}(z) &= \frac{\sqrt{\pi}}{2}e^{-z},
\end{align*}
the associated heat kernel $h_{\frac{1}{2}}(t,x,y)$ and the density $\omega_{\frac{1}{2}}(z)$ transform into
\begin{align*}
 h_{\frac{1}{2}}(t,\pi(x),\pi(y)) &= \frac{1}{\sqrt{2\pi t}}e^{-\frac{x^2+y^2}{t}}\cosh\left(\frac{2xy}{t}\right), & t>0,x,y\in\RR\setminus\{0\},\\
 \omega_{\frac{1}{2}}(\pi(z)) &= \sqrt{\pi}e^{-|z|^2}, & z\in\CC\setminus\{0\}.
\end{align*}
Note that since $e^z=\cosh(z)+\sinh(z)$ the heat kernel $h_{\frac{1}{2}}(t,\pi(x),\pi(y))$ is (up to scaling) the even part of the classical heat kernel
\begin{align*}
 \frac{1}{\sqrt{2\pi t}}e^{-\frac{(x-y)^2}{t}},
\end{align*}
substituted $t$ with $4t$. We therefore recover the corresponding results for the classical heat kernel transform on even functions on $\RR$ as a special case of our theory.

\bibliographystyle{amsplain}
\bibliography{bibdb}

\vspace{30pt}

\textsc{Jan M\"ollers\\Institut for Matematiske Fag, Aarhus Universitet, Ny Munkegade 118, 8000 Aarhus C, Danmark.}\\
\textit{E-mail address:} \texttt{moellers@imf.au.dk}



\end{document}